\numberwithin{equation}{section}
\newtheorem{thm}{Theorem}[section]
\newtheorem{prop}[thm]{Proposition}
\newtheorem{lem}[thm]{Lemma}
\newtheorem{cor}[thm]{Corollary}
\theoremstyle{definition}
\theoremstyle{remark}
\newtheorem{rem}[thm]{Remark}
\newtheorem{exmp}[thm]{Example}
\renewcommand{\hom}{\operatorname{Hom}}
\newcommand{\Z}{\mathbb{Z}}
\newcommand{\Q}{\mathbb{Q}}
\newcommand{\R}{\mathbb{R}}
\newcommand{\C}{\mathbb{C}}
\newcommand{\F}{\mathbb{F}}
\DeclareMathOperator{\bl}{Bl}
\DeclareMathOperator{\ext}{Ext}
\DeclareMathOperator{\lk}{lk}
\DeclareMathOperator{\rank}{rank}
\DeclareMathOperator{\tor}{Tor}
\begin{document}

\title[Blanchfield pairings and Gordian distance]{Blanchfield pairings and Gordian distance}

\author[S.~Friedl]{Stefan Friedl}
\address{Department of Mathematics, University of Regensburg, Germany}
\email{sfriedl@gmail.com}
\author[T.~Kitayama]{Takahiro Kitayama}
\address{Graduate School of Mathematical Sciences, the University of Tokyo, Japan}
\email{kitayama@ms.u-tokyo.ac.jp}
\author[M.~Suzuki]{Masaaki Suzuki}
\address{Department of Frontier Media Science, Meiji University, Japan}
\email{mackysuzuki@meiji.ac.jp}

\subjclass[2020]{Primary~57K10, Secondary~57K31}
\keywords{Gordian distance, Blanchfield pairing, knot}

\begin{abstract}
A lower bound of the Gordian distance is presented in terms of the Blanchfield pairing.
Our approach, in particular, allows us to show at least for $195$ pairs of unoriented nontrivial prime knots with up to $10$ crossings that their Gordian distance is equal to $3$, most of which are difficult to treat otherwise.
\end{abstract}


\sloppy

\maketitle

\section{Introduction}

In this paper we present a lower bound of the Gordian distance of knots in terms of their Blanchfield pairings, extending the work~\cite{BF14b, BF15} by Borodzik and the first author on the unknotting number of a knot.
We also describe our computational results on the lower bound for all pairs of nontrivial knots (possibly non-prime) with up to $10$ crossings. 

\subsection*{Blanchfield pairings and Gordian distance}

Let $J$ and $K$ be knots in $S^3$.
Throughout this paper all knots are understood to be oriented.
The \textit{Gordian distance between $J$ and $K$}, which we denote by $d(J, K)$, is defined to be the minimal number of crossing changes necessary to turn $J$ into $K$.
This induces a metric on the set of isotopy classes of knots.
The \textit{unknotting number} $u(J)$ of $J$ is defined to be the Gordian distance between $J$ and the unknot.
We refer the reader to \cite{BFP16, C19, Liv20} for recent studies on lower bounds on the Gordian distance of knots.

We set $\Lambda = \Z[t, t^{-1}]$, which is equipped with the involution given by $\overline{p(t)} = p(t^{-1})$ for $p(t) \in \Lambda$.
Let $R$ be a Noetherian unique factorization domain with (possibly trivial) involution $\bar{\cdot} \colon R \to R$.
We call a homomorphism $\varphi \colon \Lambda \to R$ \textit{admissible} if it satisfies the following:
\begin{enumerate}
\item $\varphi(t) \neq 1$ and $\varphi(t^{-1}) = \overline{\varphi(t)}$,
\item For every finitely generated $R$-module $L$ the module $\hom_R(L, R)$ is free.
\end{enumerate}
Every principle ideal domain satisfies (2), and so do $\Lambda$ and its localization.
(See \cite[Lemma 2.1]{BF14b} for the proof for $\Lambda$.
The same proof works also for its localization.)

Let $\varphi \colon \Lambda \to R$ be an admissible homomorphism.
We denote by $\Delta_J(t)$ the Alexander polynomial of a knot $J$.
If $\varphi(\Delta_J(t)) \neq 0$, then the Blanchfield pairing of $J$ associated with $\varphi$ is defined:
\[ \bl_{J, \varphi} \colon H_1^\varphi(X_J; R) \times H_1^\varphi(X_J; R) \to Q(R) / R, \]
where $X_J$ is the complement of an open tubular neighborhood of $J$ in $S^3$ and $Q(R)$ is the quotient ring of $R$.
(See Section~\ref{subsec_blanchfield}.)
When $\varphi$ is the identity map on $\Lambda$, $\bl_{J, \varphi}$ is the ordinary Blanchfield pairing of $J$.
For a hermitian matrix $A$ of size $n$ over $R$ with $\det A \neq 0$ we denote by $\lambda(A)$ the pairing
\[ \lambda(A) \colon R^n / A R^n \times R^n / A R^n \to Q(R) / R,~ (v, w) \mapsto \bar{v}^T A^{-1} w, \]
where we view $v$ and $w$ as represented by column vectors in $R^n$.
We write $A_0$ for the image of $A$ under the quotient map $R \to R / (\varphi(t) - 1) R$.
We define $n_\varphi(J)$ to be the minimal size of a hermitian matrix $A$ over $R$ such that
\begin{enumerate}
\item $\lambda(A)$ is isometric to $\bl_{J, \varphi}$,
\item the matrix $A_0$ is congruent over $R / (\varphi(t) - 1) R$ to a diagonal matrix which has $\pm 1$ on the diagonal.
\end{enumerate}
We will show that such a hermitian matrix $A$ exists for every admissible homomorphism $\varphi$ and every knot $J$.
(See Theorem~\ref{thm_main2}.)
When $\varphi \colon \Lambda \to S[t^{\pm 1}]$ is the embedding induced by a subring $S$ of $\C$, we write $n_S(J)$ for $n_\varphi(J)$.
In particular, $n_{\Z}(J)$ coincides with the invariant $n(J)$ first introduced in \cite{BF15}.

\subsection*{The Main theorem}

The following is the main theorem of this paper.
We write $-J$ for the mirror image of a knot $J$ with opposite orientation, $J \sharp K$ for the connected sum of two knots $J$ and $K$, and $a \cdot R$ for the principal ideal generated by $a \in R$.

\begin{thm} \label{thm_main1}
Let $J$ and $K$ be knots in $S^3$, and $\varphi \colon \Lambda \to R$ an admissible homomorphism.
If $\varphi(\Delta_J(t) \Delta_K(t)) \neq 0$ and if $\varphi(\Delta_J(t)) \cdot R + \varphi(\Delta_K(t)) \cdot R = R$, then $d(J, K) \geq n_\varphi(-J \sharp K)$.
\end{thm}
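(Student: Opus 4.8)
The plan is to reduce the statement to the case where $K$ is the unknot, and then adapt the cobordism/Blanchfield-pairing machinery from \cite{BF14b, BF15} used for the unknotting number. First I would recall the standard fact that $d(J, K) = d(-J \sharp K, U)$, where $U$ is the unknot: a sequence of $d(J,K)$ crossing changes turning $J$ into $K$ can be organized, via the usual band/connected-sum trick, into a sequence of the same length turning $-J \sharp K$ into $-K \sharp K$, which is slice, and then one uses that the invariant $n_\varphi$ behaves well under the relevant cobordism. Actually the cleaner route is to bound $u(-J \sharp K)$ directly: since $-K \sharp K$ bounds a slice disk, a sequence of $d(J,K)$ crossing changes from $-J \sharp K$ to $-K \sharp K$ exhibits $-J\sharp K$ as obtained from a slice knot by $d(J,K)$ crossing changes, which is exactly the setup of the Borodzik--Friedl bound; so it suffices to prove $n_\varphi(-J \sharp K) \le u(L) + (\text{correction})$ for slice-to-$L$ sequences, and verify the correction vanishes here. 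This is where hypothesis $\varphi(\Delta_J)\cdot R + \varphi(\Delta_K)\cdot R = R$ enters: it guarantees $\varphi(\Delta_{-K \sharp K}) = \overline{\varphi(\Delta_K)}\varphi(\Delta_K)$ is coprime to nothing problematic, and more importantly it ensures the $R$-coefficient homology of the relevant cover of the trace cobordism splits correctly so that the Blanchfield pairing of $-K \sharp K$ contributes trivially.

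The key geometric input I would set up is the following. A single crossing change on a knot $L$ is realized by a cobordism $W$ in $S^3 \times [0,1]$ obtained by attaching a single band (equivalently, $L$ and $L'$ cobound an annulus with one clasp, so $L \sharp -L'$ — or $L' \sharp -L$ — bounds a disk in $S^3$ with one double point, equivalently $L \cup -L'$ bounds a genus-zero surface after one saddle, hence $\pm 1$-surgery description). Concretely, if $L'$ is obtained from $L$ by $u := d(J,K)$ crossing changes, then $L$ bounds an immersed disk in $D^4$ with $u$ double points once $L'$ is capped by a slice disk; blowing up the double points, one gets an embedded surface, and chasing through $\pi_1$ one obtains a $4$-manifold $V$ with $\partial V$ built from $X_L$ and $u$ copies of (punctured) $\pm\mathbb{CP}^2$-type pieces. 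Pulling back $\varphi$, the twisted homology $H_*^\varphi(V; R)$ then provides, via Poincar\'e--Lefschetz duality and the intersection form on $H_2$, a hermitian matrix $A$ over $R$ of size $u$ whose associated linking pairing $\lambda(A)$ is isometric to $\bl_{L, \varphi}$ — this is the core computation, lifted essentially verbatim from \cite[§3--4]{BF14b}. The condition that $A_0$ (reduction mod $\varphi(t)-1$) is diagonal with $\pm 1$ entries comes from the fact that over $S^3$, i.e. after setting $t = 1$, the twisted homology degenerates to ordinary homology and the intersection form of the blown-up $4$-manifold is the standard diagonal form $\langle \pm 1, \dots, \pm 1\rangle$ coming from the exceptional spheres.

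The main obstacle I expect is \emph{bookkeeping the connected sum and the coprimality hypothesis cleanly}. One must check that $\bl_{-J \sharp K, \varphi}$ is isometric to $\bl_{-J, \varphi} \oplus \bl_{K, \varphi}$ (additivity of Blanchfield pairings under connected sum, valid once $\varphi(\Delta_J \Delta_K) \ne 0$), and then that the matrix $A$ produced from the $u$-double-point immersed disk for $-J \sharp K$ (built from the slice disk for $-K\sharp K$ plus the crossing-change trace) genuinely realizes this pairing without extra summands — this is precisely where $\varphi(\Delta_J)\cdot R + \varphi(\Delta_K)\cdot R = R$ is needed, since it forces the part of the homology of the cobordism coming from the $-K \sharp K$ end to vanish in $R$-coefficients (the relevant Alexander polynomials generate the unit ideal, so the corresponding torsion modules die after localizing). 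A secondary technical point is verifying that $\det A \ne 0$, which follows from $\varphi(\Delta_{-J \sharp K}(t)) = \overline{\varphi(\Delta_J(t))}\,\varphi(\Delta_K(t)) \ne 0$, guaranteed by the first hypothesis. Everything else — the passage from $d(J,K)$ to the immersed-disk picture, the duality argument, and the identification of $\lambda(A)$ with the Blanchfield pairing — is a routine adaptation of the cited work once the setup is in place, so I would state those as lemmas and refer to \cite{BF14b, BF15} for the parallel arguments.
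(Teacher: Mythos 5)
Your proposal diverges from the paper's argument in a way that creates a genuine gap. The paper does \emph{not} reduce to the unknotting-number setting by capping off with a slice disk. Instead (Proposition~\ref{prop_construction}), it builds a cobordism $W$ between the zero-surgery manifolds $M_J$ and $M_K$ directly, by attaching $2$-handles along the $\pm 1$-framed unknotted circles realizing the $d(J,K)$ crossing changes. The crucial point is that the Mayer--Vietoris sequence makes $H_1^\varphi(W;R)$ a quotient of \emph{both} $H_1^\varphi(M_J;R)$ and $H_1^\varphi(M_K;R)$, hence annihilated by both $\varphi(\Delta_J(t))$ and $\varphi(\Delta_K(t))$; the coprimality hypothesis then forces $H_1^\varphi(W;R)=0$, which is what makes $W$ a $\varphi$-tame cobordism and allows Proposition~\ref{prop_comparing} to produce the hermitian matrix realizing $\bl_{-J\sharp K,\varphi}$.

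Your construction caps the $-K\sharp K$ end with a slice disk $D'$ and then removes a neighbourhood of the resulting embedded disk. This breaks the symmetry that the coprimality hypothesis exploits. The slice disk complement $D^4\setminus\nu(D')$ has its own nontrivial twisted first homology, of order dividing $\varphi(\Delta_{D'}(t))$ where $\Delta_{D'}(t)\,\overline{\Delta_{D'}(t)}\doteq\Delta_K(t)\,\overline{\Delta_K(t)}$; this is governed by $\varphi(\Delta_K(t))$, not by $\varphi(\Delta_J(t))$, and there is no reason for it to vanish just because $\varphi(\Delta_J(t))\cdot R+\varphi(\Delta_K(t))\cdot R=R$. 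Your claim that the coprimality hypothesis ``forces the part of the homology of the cobordism coming from the $-K\sharp K$ end to vanish'' is therefore unjustified: in the capped picture the relevant module is annihilated only by $\varphi(\Delta_J(t))\,\overline{\varphi(\Delta_K(t))}\,\varphi(\Delta_K(t))$-type elements, and coprimality of $\varphi(\Delta_J)$ with $\varphi(\Delta_K)$ does not make that a unit. You also open with the assertion that $d(J,K)=d(-J\sharp K,U)$ is ``standard''; it is not --- neither inequality between $d(J,K)$ and $u(-J\sharp K)$ is known in general, and this is closely related to the open problem of additivity of the unknotting number (what one does get from a crossing-change sequence from $J$ to $K$ is a bound on the slice genus $g_4(-J\sharp K)$, since $-K\sharp K$ is slice, not a bound on $u(-J\sharp K)$). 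To salvage your route you would have to abandon the slice-disk capping and instead run the intersection-form argument on a cobordism that retains both $M_J$ and $M_K$ as boundary components, which is exactly the paper's Proposition~\ref{prop_construction} combined with Proposition~\ref{prop_comparing}.
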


In fact, in Section \ref{sec_main} we will state and prove a slightly stronger theorem as Theorem~\ref{thm_main2} which takes positive and negative crossing changes into account.
In the case that $\Delta_J(t) = 1$ and $\varphi$ is the identity map on $\Lambda$, this theorem recovers \cite[Theorem 1.1]{BF15} by Borodzik and the first author.
In particular, they gave new obstructions for $u(J) = 2$ and $u(J) = 3$ as explained below.
It is known that lower bounds of $n(J)$ are given by the Nakanishi index~\cite{N81}, the Levine-Tristram signatures~\cite{BF14a, Le69, Mus65, Ta69, Tr69}, the Lickorish obstruction~\cite{CoLi86, Lic85}, the Murakami obstruction~\cite{Muk90} and the Jabuka obstruction~\cite{J09}.
See~\cite{BF, BF15} for the details.

\begin{cor}
Let $J$ and $K$ be knots in $S^3$.
If $\Delta_J(t) \cdot \Lambda + \Delta_K(t) \cdot \Lambda = \Lambda$, then $d(J, K) \geq n(-J \sharp K)$.
\end{cor}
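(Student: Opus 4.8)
The plan is to obtain the corollary by specialising Theorem~\ref{thm_main1} to the identity homomorphism $\varphi = \mathrm{id}_\Lambda$, with $R = \Lambda$ equipped with its standard involution $\overline{p(t)} = p(t^{-1})$, so that the entire argument reduces to checking the hypotheses of that theorem in this special case and then translating the conclusion into the notation of the corollary.

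First I would verify that $\mathrm{id}_\Lambda$ is admissible. Condition~(1) of admissibility is immediate, since $\varphi(t) = t \neq 1$ and $\varphi(t^{-1}) = t^{-1} = \overline{t} = \overline{\varphi(t)}$; condition~(2), that $\hom_\Lambda(L,\Lambda)$ is free for every finitely generated $\Lambda$-module $L$, is exactly the fact recalled just after the definition of admissibility (with reference to \cite[Lemma~2.1]{BF14b}), so nothing new has to be proved. Next I would check the two numerical hypotheses of Theorem~\ref{thm_main1}. Since the Alexander polynomial of a knot is nonzero (for instance because $\Delta_J(1) = \pm 1$) and $\Lambda$ is a domain, we get $\varphi(\Delta_J(t)\Delta_K(t)) = \Delta_J(t)\Delta_K(t) \neq 0$. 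And the coprimality hypothesis $\varphi(\Delta_J(t))\cdot R + \varphi(\Delta_K(t))\cdot R = R$ reads $\Delta_J(t)\cdot\Lambda + \Delta_K(t)\cdot\Lambda = \Lambda$, which is precisely the assumption of the corollary.

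With the hypotheses in place, Theorem~\ref{thm_main1} yields $d(J,K) \geq n_\varphi(-J\sharp K)$, and it remains only to identify $n_\varphi(-J\sharp K)$ with $n(-J\sharp K)$. The identity map $\mathrm{id}_\Lambda$ is the embedding $\Lambda \to S[t^{\pm 1}]$ induced by the subring $S = \Z \subseteq \C$, so by the definitions recalled in the introduction we have $n_\varphi = n_\Z = n$, the invariant of \cite{BF15}. This completes the argument. There is no genuine obstacle here: the corollary is a direct specialisation, and the only inputs beyond a one-line verification are the freeness statement~(2) for $\Lambda$-modules and the non-vanishing of Alexander polynomials, both of which are standard.
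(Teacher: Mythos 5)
Your proof is correct and follows exactly the route the paper intends: specialise Theorem~\ref{thm_main1} to $\varphi = \mathrm{id}_\Lambda$ with $R = \Lambda$, check admissibility (trivial for condition~(1), and condition~(2) is the cited fact from \cite[Lemma~2.1]{BF14b}), observe the hypotheses translate verbatim, and then identify $n_{\mathrm{id}_\Lambda} = n_\Z = n$ via the notational convention introduced just before Theorem~\ref{thm_main1}. The paper leaves this corollary without a written proof precisely because it is such a direct specialisation, so there is nothing to add.
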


\begin{cor}
Let $J$ and $K$ be knots in $S^3$.
If the greatest common divisor of $\Delta_J(t)$ and $\Delta_K(t)$ is equal to $\pm 1$, then $d(J, K) \geq n_{\Z \left[ \frac{1}{m} \right]}(-J \sharp K)$, where $m$ is the minimal positive integer in $\Delta_J(t) \cdot \Lambda + \Delta_K(t) \cdot \Lambda$.
\end{cor}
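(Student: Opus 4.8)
The plan is to derive the corollary from Theorem~\ref{thm_main1} by taking $R$ to be the subring $\Z\!\left[\tfrac1m\right][t^{\pm 1}]$ of $\Q[t^{\pm 1}]$, with the involution $t \mapsto t^{-1}$ (and trivial involution on $\Z\!\left[\tfrac1m\right]$), and $\varphi \colon \Lambda \to R$ the inclusion. By the notational convention introduced just before the Main Theorem, $n_\varphi(-J \sharp K) = n_{\Z[\frac1m]}(-J \sharp K)$ for this $\varphi$, so the conclusion $d(J,K) \geq n_\varphi(-J \sharp K)$ of Theorem~\ref{thm_main1} is precisely the asserted inequality. Hence the whole task reduces to checking the three hypotheses of that theorem for this $\varphi$, plus the side issue that $m$ is well defined.

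The admissibility of $\varphi$ I would dispose of first, and it is immediate: $\varphi(t) = t \neq 1$ and $\varphi(t^{-1}) = t^{-1} = \overline{\varphi(t)}$, which is condition (1), while $R$ is visibly the localization $\Lambda\!\left[\tfrac1m\right]$ of $\Lambda$ at the multiplicative set $\{m^k : k \geq 0\}$, so condition (2) holds by the remark following the definition of admissibility. Likewise $\varphi(\Delta_J(t)\Delta_K(t)) \neq 0$, since Alexander polynomials of knots are nonzero elements of $\Lambda$ and $\varphi$ is injective.

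The one point that genuinely requires an argument is the coprimality hypothesis $\varphi(\Delta_J(t)) \cdot R + \varphi(\Delta_K(t)) \cdot R = R$, together with the existence of $m$. Set $I := \Delta_J(t)\cdot\Lambda + \Delta_K(t)\cdot\Lambda$. Since $\Lambda$ is a UFD whose units are $\pm t^k$, the hypothesis $\gcd(\Delta_J,\Delta_K) = \pm 1$ says that $\Delta_J$ and $\Delta_K$ have no common irreducible factor; multiplying by suitable powers of $t$ to get genuine coprime elements of $\Z[t]$ and invoking Gauss's Lemma, they remain coprime in $\Q[t]$, hence generate the unit ideal of the PID $\Q[t, t^{-1}]$. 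So $a\Delta_J(t) + b\Delta_K(t) = 1$ for some $a, b \in \Q[t^{\pm 1}]$, and clearing denominators yields a nonzero integer in $I$; therefore $I \cap \Z$ is a nonzero ideal of $\Z$ and $m$ is well defined as its positive generator. Finally $m \in I$ gives $m \in \varphi(\Delta_J(t)) \cdot R + \varphi(\Delta_K(t)) \cdot R$, and $m$ is a unit of $R = \Z\!\left[\tfrac1m\right][t^{\pm 1}]$, so that ideal is all of $R$. With all hypotheses verified, Theorem~\ref{thm_main1} applies and gives $d(J,K) \geq n_{\Z[\frac1m]}(-J \sharp K)$.

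I expect no serious obstacle: the corollary is essentially a specialization of Theorem~\ref{thm_main1}. The step I would take most care over is the passage from ``$\gcd = \pm 1$ in $\Lambda$'' to ``generates the unit ideal in $\Q[t^{\pm 1}]$,'' since $\Lambda$ itself is not a PID; this is exactly the reason one must invert $m$ rather than work over $\Lambda$ directly as in the preceding corollary.
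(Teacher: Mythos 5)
Your proposal is correct and is exactly the specialization the paper has in mind: take $R = \Z\!\left[\tfrac1m\right][t^{\pm 1}] = \Lambda\!\left[\tfrac1m\right]$ with $\varphi$ the inclusion, so that condition (2) of admissibility holds by the paper's remark on localizations of $\Lambda$, and the coprimality hypothesis of Theorem~\ref{thm_main1} follows because $m$ is a unit in $R$ lying in $\varphi(\Delta_J)R + \varphi(\Delta_K)R$. Your care over the Gauss's-Lemma step (passing from $\gcd = \pm 1$ in $\Lambda$ to a nonzero integer in $\Delta_J\cdot\Lambda + \Delta_K\cdot\Lambda$, so that $m$ is well defined) is precisely the point one must address and you handle it correctly.
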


\begin{cor}
Let $J$ and $K$ be knots in $S^3$.
If the greatest common divisor of $\Delta_J(t)$ and $\Delta_K(t)$ is equal to $\pm 1$, then $d(J, K) \geq n_\R(-J \sharp K)$.
\end{cor}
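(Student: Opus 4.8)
The plan is to apply Theorem~\ref{thm_main1} to the admissible homomorphism $\varphi \colon \Lambda \to R := \R[t^{\pm 1}]$ given by the inclusion, which is exactly the map whose associated invariant is $n_\R$. First one checks that $\varphi$ is admissible. The ring $R = \R[t^{\pm 1}]$ is a localization of the principal ideal domain $\R[t]$, hence is itself a principal ideal domain, in particular a Noetherian unique factorization domain; its involution is $\overline{p(t)} = p(t^{-1})$, since complex conjugation is trivial on $\R$. Condition (1) holds because $\varphi(t) = t \neq 1$ in $R$ and $\varphi(t^{-1}) = t^{-1} = \overline{\varphi(t)}$, and condition (2) holds because every principal ideal domain satisfies it, as recalled after the definition of admissibility (for a finitely generated module $L$ over a domain, $\hom_R(L, R)$ is torsion free, hence free when $R$ is a principal ideal domain).

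Next one verifies the two hypotheses of Theorem~\ref{thm_main1} for this $\varphi$. Since Alexander polynomials are nonzero and $\varphi$ is injective, $\varphi(\Delta_J(t)\Delta_K(t)) \neq 0$. For the second hypothesis one must show $\varphi(\Delta_J(t)) \cdot R + \varphi(\Delta_K(t)) \cdot R = R$. The assumption is that $\gcd(\Delta_J(t), \Delta_K(t)) = \pm 1$ in the unique factorization domain $\Lambda$; as the units of $\Lambda$ are exactly the $\pm t^n$, this says $\Delta_J(t)$ and $\Delta_K(t)$ have no common noninvertible factor in $\Lambda$. Under the inclusion $\Lambda \hookrightarrow \Q[t^{\pm 1}]$ (a localization of $\Lambda$, hence a principal ideal domain), Gauss's lemma shows that $\Delta_J(t)$ and $\Delta_K(t)$ remain coprime: a common factor in $\Q[t^{\pm 1}]$ could be cleared of denominators and divided by its content to yield a common noninvertible factor in $\Lambda$. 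Therefore there exist $p, q \in \Q[t^{\pm 1}]$ with $p\Delta_J(t) + q\Delta_K(t) = 1$, and since $\Q[t^{\pm 1}] \subseteq R = \R[t^{\pm 1}]$ this same identity shows $\varphi(\Delta_J(t)) \cdot R + \varphi(\Delta_K(t)) \cdot R = R$.

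With both hypotheses verified, Theorem~\ref{thm_main1} yields $d(J, K) \geq n_\varphi(-J \sharp K) = n_\R(-J \sharp K)$, which is the assertion. The only step carrying any content is the passage from coprimality in $\Lambda$ to the unit ideal in $\R[t^{\pm 1}]$, i.e.\ the content/Gauss-lemma argument that coprimality is preserved along $\Lambda \hookrightarrow \Q[t^{\pm 1}] \hookrightarrow \R[t^{\pm 1}]$; everything else is a direct instantiation of Theorem~\ref{thm_main1}.
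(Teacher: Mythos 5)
Your proof is correct and takes what is essentially the paper's intended route: the corollary is a direct instantiation of Theorem~\ref{thm_main1} with $\varphi$ the inclusion $\Lambda \hookrightarrow \R[t^{\pm 1}]$, and the only real content is showing that coprimality of $\Delta_J$ and $\Delta_K$ in $\Lambda$ implies they generate the unit ideal in $\R[t^{\pm 1}]$, which you handle correctly via Gauss's lemma through $\Q[t^{\pm 1}]$. The paper also offers an even shorter route implicit in the remark immediately following the corollary, namely chaining through the previous corollary via $n_{\Z[1/m]}(J) \geq n_\R(J)$, but your direct argument is equally valid and self-contained.
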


Note that $n_\Z(J) \geq n_{\Z \left[ \frac{1}{m} \right]}(J) \geq n_\R(J)$ for every positive integer $m$.
Borodzik and the first author~\cite{BF14a} proved that $n_\R(J)$ is completely determined by the Levine-Tristram signatures and the nullities.

We now consider the admissible homomorphism $\varphi_{-1} \colon \Lambda \to \Z$ sending $t$ to $-1$.
Then it is well-known that $H_1^\varphi(X_J; \Z)$ is isomorphic to $H_1(\Sigma(J); \Z)$, where $\Sigma(J)$ is the double branched cover of $S^3$ along $J$.
Since $\Sigma(J)$ is a rational homology $3$-sphere, we have its linking pairing
\[ \lk_J \colon H_1(\Sigma(J); \Z) \times H_1(\Sigma(J); \Z) \to \Q / \Z. \]
It follows from \cite[Lemma 3.3]{BF15} that $\bl_{J, \varphi_{-1}}$ is isometric to $2 \lk_J$, where $2 \lk_J$ is defined by $(2 \lk_J)(v, w) = 2 \cdot \lk_J(v, w)$ for $v$, $w \in H_1(\Sigma(J); \Z)$.
Hence $n_{\varphi_{-1}}(J)$ coincides with the minimal size of an integral symmetric matrix representing $2 \lk_J$ and being congruent to the identity matrix modulo $2$.
Note also that $\varphi_{-1}(\Delta_J(t)) = \Delta_J(-1) = \pm \det(J)$.
Thus we have the following corollary.

\begin{cor} \label{cor_linking_pairing}
Let $J$ and $K$ be knots in $S^3$.
If $\det(J)$ and $\det(K)$ are coprime, then $d(J, K)$ is greater than or equal to the minimal size of an integral symmetric matrix representing $2 \lk_{-J \sharp K}$ and being congruent to the identity matrix modulo $2$.
\end{cor}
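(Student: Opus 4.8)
The plan is to derive Corollary~\ref{cor_linking_pairing} as a direct specialization of Theorem~\ref{thm_main1} using the admissible homomorphism $\varphi_{-1} \colon \Lambda \to \Z$, $t \mapsto -1$, together with the identifications already recorded in the excerpt. First I would check that $\varphi_{-1}$ is admissible: the ring $\Z$ is a Noetherian unique factorization domain, we equip it with the trivial involution, and then condition~(1) holds since $\varphi_{-1}(t) = -1 \neq 1$ and $\overline{\varphi_{-1}(t)} = -1 = \varphi_{-1}(t^{-1})$, while condition~(2) holds because $\Z$ is a principal ideal domain, so $\hom_\Z(L, \Z)$ is free for every finitely generated $\Z$-module $L$.

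Next I would translate the hypothesis. Since $\varphi_{-1}(\Delta_J(t)) = \Delta_J(-1) = \pm \det(J)$ and likewise for $K$, the coprimality of $\det(J)$ and $\det(K)$ is exactly the statement $\varphi_{-1}(\Delta_J(t)) \cdot \Z + \varphi_{-1}(\Delta_K(t)) \cdot \Z = \Z$; in particular both determinants are nonzero, so $\varphi_{-1}(\Delta_J(t)\Delta_K(t)) \neq 0$. Theorem~\ref{thm_main1} then applies and yields $d(J, K) \geq n_{\varphi_{-1}}(-J \sharp K)$. It remains to identify the right-hand side with the stated matrix invariant. Here I would invoke the two facts recalled in the paragraph preceding the corollary: $H_1^{\varphi_{-1}}(X_{L}; \Z) \cong H_1(\Sigma(L); \Z)$ and $\bl_{L, \varphi_{-1}}$ is isometric to $2\lk_L$ for any knot $L$. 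Finally, since $\varphi_{-1}(t) - 1 = -2$, the ring $\Z / (\varphi_{-1}(t) - 1)\Z$ is $\Z / 2\Z$, in which the only unit is $1$; hence condition~(2) in the definition of $n_\varphi$ says precisely that $A_0$ is congruent over $\Z/2\Z$ to the identity matrix, i.e. that the integral symmetric matrix $A$ is congruent to the identity modulo $2$. Combining these, $n_{\varphi_{-1}}(-J \sharp K)$ is the minimal size of an integral symmetric matrix $A$ with $\lambda(A)$ isometric to $2\lk_{-J \sharp K}$ and $A \equiv I \pmod 2$, which is what the corollary asserts.

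I do not expect a serious obstacle here, as the argument is essentially a bookkeeping exercise: the substantive input (existence of $A$, the lower bound, the isometry $\bl_{L,\varphi_{-1}} \cong 2\lk_L$, and the homology identification) is all quoted from Theorem~\ref{thm_main1}, Theorem~\ref{thm_main2}, and \cite[Lemma 3.3]{BF15}. The only point requiring a line of care is verifying that ``congruent modulo $2$ to a diagonal matrix with $\pm 1$ on the diagonal'' collapses to ``congruent modulo $2$ to the identity matrix,'' which is immediate because $-1 = 1$ in $\Z/2\Z$. One might also remark that the orientation conventions ($-J$ denoting the mirror with reversed orientation) are consistent between the general theorem and this corollary, but no additional work is needed.
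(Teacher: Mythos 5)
Your proposal is correct and matches the paper's approach exactly: specialize Theorem~\ref{thm_main1} to the admissible homomorphism $\varphi_{-1}\colon\Lambda\to\Z$, $t\mapsto -1$, use the isometry $\bl_{L,\varphi_{-1}}\cong 2\lk_L$ and the identification $\varphi_{-1}(\Delta_L(t))=\pm\det(L)$, and observe that the diagonal $\pm 1$ condition on $A_0$ over $R_0=\Z/2\Z$ collapses to ``congruent to the identity mod $2$.'' The paper treats this as immediate from the paragraph preceding the corollary, and your reconstruction fills in the same bookkeeping.
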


When one of the knots $J$ or $K$ is the unknot, Corollary~\ref{cor_linking_pairing} gives a lower bound of the unknotting number and, in particular, the Lickorish obstruction for $u(J) = 1$~\cite{CoLi86, Lic85}.
Borodzik and the first author~\cite[Section 6]{BF15} found examples of knots $J$ where the lower bound gives new obstructions for $u(J) = 2$ and $u(J) = 3$.
For example, they first determined $u(11_{a123}) = 3$ and $u(11_{n148}) = 3$.
See also \cite{BF} for the details.

\subsection*{Applications}

Corollary~\ref{cor_linking_pairing} provides a new computable obstruction for $d(J, K) = 2$ for knots $J$ and $K$ in $S^3$.
The idea behind our approach is the following, which is the same as in \cite[Section 5]{BF15}.
First it is well-known that the isometric type of the linking pairing $\lk_{-J \sharp K}$ for $J$ and $K$ can be calculated in terms of Seifert matrices of $J$ and $K$.
Suppose that $\lk_{-J \sharp K}$ is represented by an integral $n \times n$ matrix.
Up to congruence there exists finitely many such matrices, which furthermore in many cases can be listed explicitly.
It is then straightforward to verify whether or not $\lk_{-J \sharp K}$ can be represented by any of these matrices.

As a concrete example we show in Example~\ref{exmp_3} that the Gordian distance from the trefoil $3_1$ to the connected sum $4_1\sharp 4_1$ of two copies of the figure 8 knot equals 3, in other words, $d(3_1,4_1 \sharp 4_1)=3$. It seems like most other known lower bounds on the Gordian distance struggle to prove this result.

Next note that Corollary~\ref{cor_linking_pairing} provides the same lower bounds of $d(J, K)$, $d(rJ, K)$, $d(J, rK)$ and $d(rJ, rK)$, where we write $rJ$ for a knot $J$ with opposite orientation.
This follows from the fact that the linking pairing $\lk_{-J \sharp K}$ is invariant under the changes of the orientations of the knots, as it will be discussed in Section~\ref{subsec_obstructions}. 
Also, by definition, $d(J, K) = d(K, J) = d(mJ, mK) = d(mK, mJ)$, where we write $mJ$ for the mirror image of a knot $J$.
Therefore in describing numbers of pairs in the following computational results we do not distinguish the pairs $(J, K)$, $(rJ, K)$, $(J, rK)$ and $(rJ, rK)$, and neither the pairs $(J, K)$, $(K, J)$, $(mJ, mK)$ and $(mK, mJ)$.

Among all pairs $(J, K)$ of nontrivial prime knots with up to $10$ crossings there are $886$ pairs for which Corollary~\ref{cor_linking_pairing} shows that $d(J, K) \geq 3$, but for which any of the lower bounds of $d(J, K)$ by the signature, the Rasmussen $s$-invariant~\cite{Ra10}, the Ozsv\'ath-Szab\'o $\tau$-invariant~\cite{OS03} and the maximal rank of $H_1(\Sigma(J); \F_p)$ for all odd primes $p$ is less than $3$.
For $195$ pairs $(J, K)$ of knots among the $886$ ones, it is known that $u(J) + u(K) \leq 3$, and since $d(J, K) \leq u(J) + u(K)$, we can conclude that $d(J, K) = u(J) + u(K) = 3$.
Also, if we count the numbers of such pairs of knots, instead, for all pairs $(J, K)$ of nontrivial knots (possibly non-prime) with up to $10$ crossings, then the first one $886$ is replaced by $1696$, and the second one $195$ is by $360$.

Details are given in Section~\ref{sec_app} and on the website~\cite{FKS} by the authors.
At the time of writing we have not yet implemented the obstruction to $d(J, K) = n$ for higher values of $n$.

\subsection*{Organization}

Section~\ref{sec_pairing} provides a brief review of Blanchfield pairings of a knot and a homology $S^2 \times S^1$ and intersection pairings of a $4$-manifold with coefficients in $R$.
In Section~\ref{sec_cob} we study a certain relation among the pairings of homology $S^1 \times S^2$'s and of a cobordism between them, which is a key ingredient in the proof of the main theorem.
In Section~\ref{sec_main} we state and prove a slightly stronger theorem deducing the main theorem.
Section~\ref{sec_app} is devoted to applications, where we describe more details of our approach and computational results with examples. 

\subsection*{Convention and notation}

All knots are understood to be oriented, and all manifolds are understood to be compact, connected and oriented, unless we say specifically otherwise.
We do not assume that manifolds are smooth.
Throughout this paper $R$ is a Noetherian unique factorization domain with (possibly trivial) involution $\bar{\cdot} \colon R \to R$.
The quotient field of $R$ is denoted by $Q(R)$.

\subsection*{Acknowledgments}
The website Knotinfo: Table of knot invariants~\cite{LM}, maintained by Chuck Livingston and Allison H.~Moore, has been an invaluable tool for finding examples and testing our algorithm.
The authors would like to thank them for helpful information on data provided on the website.
The research started and was carried out while the second and third authors were visiting the University of Regensburg.
They were very grateful for the warm hospitality.

The first author was supported by the SFB 1085 ``higher invariants'' funded by the DFG.
The second author was supported by JSPS KAKENHI Grant Numbers JP18K13404 and JP18KK0380.
The third author was supported by JSPS KAKENHI Grant Numbers JP19H01785 and JP20K03596.

\section{Blanchfield pairings and twisted intersection pairings} \label{sec_pairing}

We begin with the definitions of the twisted Blanchfield pairings of a knot and a homology $S^1 \times S^2$ and the twisted intersection pairing of a $4$-manifold associated with an admissible homomorphism $\varphi \colon \Lambda \to R$.
We refer the reader to \cite[Chapter 2]{H12} for a thorough treatment of the classical Blanchfield pairing and to \cite[Appendix]{FKLMN20} for more details on (commutative) twisted Blanchfield pairings.

\subsection{Twisted homology and cohomology groups}

Let $X$ be a topological space equipped with an epimorphism $H_1(X; \Z) \to \Z$ admitting the corresponding infinite cyclic covering $\widetilde{X}$ of $X$.
Let $Y$ be a subspace of $X$, and we write $\widetilde{Y}$ for the preimage of $Y$ by the covering map $\widetilde{X} \to X$.
The singular chain complex $C_*(\widetilde{X}, \widetilde{Y})$ has the structure of a $\Lambda$-module, where the action by $t$ corresponds to the deck transformation on $\widetilde{X}$ by $1 \in \Z$.

Recall that a homomorphism $\varphi \colon \Lambda \to R$ is called \textit{admissible} if it satisfies the following:
\begin{enumerate}
\item $\varphi(t) \neq 1$ and $\varphi(t^{-1}) = \overline{\varphi(t)}$,
\item For every finitely generated $R$-module $L$ the module $\hom_R(L, R)$ is free.
\end{enumerate}

Let $\varphi \colon \Lambda \to R$ be an admissible homomorphism and $L$ an $R$-module, which has the structure of a $\Lambda$-module induced by $\varphi$.
For each nonnegative integer $i$ we define the \textit{$i$-th twisted homology group} $H_i^\varphi(X, Y; L)$ and the \textit{$i$-th twisted cohomology group} $H_\varphi^i(X, Y; L)$ \textit{of $(X, Y)$ associated with $\varphi$} as:
\begin{align*}
H_i^\varphi(X, Y; L) &= H_i(\overline{C_* (\widetilde{X}, \widetilde{Y})} \otimes_{\Lambda} L), \\
H_\varphi^i(X, Y; L) &= H^i(\hom_\Lambda(C_*(\widetilde{X}, \widetilde{Y}), L)).
\end{align*}
Here we denote by $\overline{C_* (\widetilde{X}, \widetilde{Y})}$ the module with the involuted $\Lambda$-structure, that is, $\overline{C_* (\widetilde{X}, \widetilde{Y})} = C_* (\widetilde{X}, \widetilde{Y})$ as abelian groups but multiplication by $a \in \Lambda$ corresponds to multiplication by $\bar{a}$.
Similarly, for an $R$-module $H$ we denote by $\overline{H}$ the module with the involuted $R$-structure.
When $Y$ is empty, we write $H_i^\varphi(X; L)$ and $H_\varphi^i(X; L)$ respectively.
Also, when $\varphi$ is the identity map on $\Lambda$ and $L = \Lambda$, then we write $H_i(X, Y; \Lambda)$ and $H^i(X, Y; \Lambda)$ respectively.

The \textit{Kronecker pairing}
\[ \kappa \colon H_\varphi^i(X, Y; L) \times H_i^\varphi(X, Y; R) \to L \]
is defined as the induced one by the sesquilinear pairing
\[ \hom_\Lambda(C_i(\widetilde{X}, \widetilde{Y}), L) \times \overline{C_i (\widetilde{X}, \widetilde{Y})} \otimes_{\Lambda} R \to L,~ (f, c \otimes a) \mapsto \bar{a}f(c) \]
for $f \in \hom_\Lambda(C_i(\widetilde{X}, \widetilde{Y}), L)$, $c \in \overline{C_i (\widetilde{X}, \widetilde{Y})}$ and $a \in R$.
Thus we have the evaluation map
\[ H_\varphi^i(X, Y; L) \to \overline{\hom_R(H_i^\varphi(X, Y; R), L)},~ f \mapsto \kappa(f, \cdot). \]

We set $R_0 = R / (\varphi(t) - 1) R$. 
If $X$ is path-connected, then the $0$-th twisted homology and cohomology groups are computed as follows (see for instance \cite[Proposition 3.1]{HS71}):
\[ H_0^\varphi(X; L) = L \otimes_R R_0,~ H_\varphi^0(X; L) = \{ v \in L ~;~ \varphi(t) v = v \}. \]
Since $\varphi(t) \neq 1$ and $R$ is an integral domain, we have
\[ \tor_1^\Lambda(H_0(X; \Lambda), R) = \tor_1^\Lambda(\Lambda / (t-1) \Lambda, R) = \{ x \in R ~;~ (\varphi(t) - 1) x = 0 \} = 0. \]
It thus follows from the universal coefficient spectral sequence~\cite[Theorem 10.90]{Ro09} that the homomorphism $H_1(X; \Lambda) \otimes_\Lambda R \to H_1^\varphi(X; R)$ is an isomorphism.

\subsection{Blanchfield pairings} \label{subsec_blanchfield}

Recall that for a finitely generated $R$-module $H$ with an exact sequence
\[ R^l \to R^m \xrightarrow{r} H \to 0, \]
where $l \geq m$, the order of $H$ is defined to be the greatest common divisor of the $m$-minors of a representation matrix of $r$, and is well-defined up to multiplication by a unit in $R$. 
(See for instance \cite{H12, Lic97}.)

Let $J$ be a knot in $S^3$.
The complement $X_J$ of an open tubular neighborhood of $J$ in $S^3$ admits an isomorphism $H_1(X_J; \Z) \to \Z$ induced by the orientation of $J$.
Recall that the Alexander polynomial $\Delta_J(t) \in \Lambda$ of $J$ is defined to be the order of the Alexander module $H_1(X_J; \Lambda)$.
Let $\varphi \colon \Lambda \to R$ be an admissible homomorphism, and suppose that $\varphi(\Delta_K(t)) \neq 0$.
Since $H_1^\varphi(X_J; R)$ is isomorphic to $H_1(X_J; \Lambda) \otimes_\Lambda R$, $H_1^\varphi(X_J; R)$ is a torsion $R$-module and its order is equal to $\varphi(\Delta_J(t))$.

We consider the following sequence of homomorphisms 
\begin{align*}
\Phi_{J, \varphi} \colon H_1^\varphi(X_J; R) &\to H_1^\varphi(X_J, \partial X_J; R) & &\to H_\varphi^2(X_J; R) \\
&\xleftarrow{\cong} H_\varphi^1(X_J; Q(R) / R) & &\to \overline{\hom_R(H_1^\varphi(X_J; R), Q(R) / R)}.
\end{align*}
Here the homomorphisms are as follows:
\begin{enumerate}
\item the first one is the inclusion induced homomorphism;
\item the second one comes from Poincar\'e duality;
\item the third one is the Bockstein homomorphism
\[ H_\varphi^1(X_J; Q(R) / R) \to H_\varphi^2(X_J; R) \]
corresponding to the short exact sequence of $\Lambda$-modules
\[ 0 \to R \to Q(R) \to Q(R) / R \to 0; \]
\item the last one is the evaluation map induced by the Kronecker pairing $\kappa$.
\end{enumerate}
All of these can be checked to be isomorphisms.
See \cite[Appendix]{FKLMN20} for the details.
Thus $\Phi_{J, \varphi}$ induces a non-singular sesquilinear pairing
\[ \bl_{J, \varphi} \colon H_1^\varphi(X_J; R) \times H_1^\varphi(X_J; R) \to Q(R) / R,~ (v, w) \mapsto \Phi_{J, \varphi}(v)(w), \]
which we call the \textit{Blanchfield pairing of $J$ associated with $\varphi$}.
It is well-known that the pairing is hermitian.
When $\varphi$ is the identity map on $\Lambda$, the pairing is the ordinary Blanchfield pairing $\bl_J$ of $J$.

We call a closed $3$-manifold $M$ equipped with an isomorphism $H_1(M; \Z) \to \Z$ a \textit{homology $S^1 \times S^2$}.
Let $M$ be a homology $S^1 \times S^2$.
We define the \textit{Alexander polynomial} $\Delta_M(t) \in \Lambda$ of $M$ to be the order of $H_1(M; \Lambda)$.
We will show in Lemma~\ref{lem_homology1} that $H_1^\varphi(M; R)$ is a torsion $R$-module.
Its order is equal to $\varphi(\Delta_M(t))$.
This can be checked for instance by using Reidemeister torsion~\cite[Proposition 3.6 and Corollary 11.9]{Tu01}.

Suppose that $\varphi(\Delta_M(t)) \neq 0$.
Similarly, the following sequence of isomorphisms 
\[ \Phi_{M, \varphi} \colon H_1^\varphi(M; R) \to H_\varphi^2(M; R) \xleftarrow{\cong} H_\varphi^1(M; Q(R) / R) \to \overline{\hom_R(H_1^\varphi(M; R), Q(R) / R)} \]
induces a non-singular hermitian sesquilinear pairing
\[ \bl_{M, \varphi} \colon H_1^\varphi(M; R) \times H_1^\varphi(M; R) \to Q(R) / R,~ (v, w) \mapsto \Phi_{M, \varphi}(v)(w), \]
which we call the \textit{Blanchfield pairing of $M$ associated with $\varphi$}.

We denote by $M_J$ the result of $0$-framed surgery of $S^3$ along $J$, which is a homology $S^1 \times S^2$.
The equipped isomorphism $H_1(M_J; \Z) \to \Z$ sends the meridional element to $1$.
It can be checked that the inclusion induced homomorphism $H_1^\varphi(X_J; R) \to H_1^\varphi(M_J; R)$ is an isometry with respect to $\bl_{J, \varphi}$ and $\bl_{M_J, \varphi}$.

\subsection{Intersection pairings}

Let $W$ be a topological $4$-manifold equipped with an epimorphism $H_1(W; \Z) \to \Z$, and $\varphi \colon \Lambda \to R$ an admissible homomorphism.
Recall that we set $R_0 = R / (\varphi(t) - 1) R$.
We denote by $G^\varphi(W; R)$ and $G(W; R_0)$ the cokernels of the inclusion induced homomorphisms $H_2^\varphi(\partial W; R) \to H_2^\varphi(W; R)$ and $H_2(\partial W; R_0) \to H_2(W; R_0)$ respectively.
It follows from the homology long exact sequence for $(W, \partial W)$ that $G^\varphi(W; R)$ and $G(W; R_0)$ are isomorphic to the kernels of the homomorphisms $H_2^\varphi(W, \partial W; R) \to H_1^\varphi(\partial W; R)$ and $H_2(W, \partial W; R_0) \to H_1(\partial W; R_0)$ respectively.

We consider the following sequence of homomorphisms
\[ \Psi_{W, \varphi} \colon H_2^\varphi(W; \Lambda) \to H_2^\varphi(W, \partial W; \Lambda) \to H_\varphi^2(W; \Lambda) \to \overline{\hom_R(H_2^\varphi(W; R), R)}. \]
Here the homomorphisms are as follows:
\begin{enumerate}
\item the first one is the inclusion induced homomorphism;
\item the second one comes from Poincar\'e duality;
\item the third one is the evaluation map induced by the Kronecker pairing $\kappa$.
\end{enumerate}
The map $\Psi_{W, \varphi}$ induces a hermitian sesquilinear pairing:
\[ H_2^\varphi(W; R) \times H_2(W; R) \to R,~ (v, w) \mapsto \widetilde{\Psi}_{W, \varphi}(v)(w), \]
which we call the \textit{intersection pairing of $W$ associated with $\varphi$}.
Since $\Psi_{W, \varphi}$ factors through $H_2^\varphi(W, \partial W; \Lambda)$, the intersection pairing induces a pairing
\[ G^\varphi(W; R) \times G^\varphi(W; R) \to R. \]

Similarly, considering the composition of $\varphi$ and the quotient map $R \to R_0$, we have the intersection pairings of $W$ over $R_0$:
\[ H_2(W; R_0) \times H_2(W; R_0) \to R_0,~ G(W; R_0) \times G(W; R_0) \to R_0. \]
When $\varphi$ is the identity map on $\Lambda$, $R_0 = \Z$ and the first pairing is the ordinary intersection pairing of $W$. 

\section{Cobordisms between homology $S^1 \times S^2$'s} \label{sec_cob}

We describe a relation of twisted Blanchfield pairings of homology $S^1 \times S^2$'s and twisted intersection pairings of a certain cobordism between them.

\subsection{Tame cobordisms}

Let $M$ and $N$ be homology $S^1 \times S^2$'s, and $\varphi \colon \Lambda \to R$ an admissible homomorphism with $\varphi(\Delta_M(t) \Delta_N(t)) \neq 0$. 
We call a topological $4$-manifold $W$ equipped with an isomorphism $H_1(W; \Z) \to \Z$ a \textit{$\varphi$-tame cobordism from $M$ to $N$} if it satisfies the following:
\begin{enumerate}
\item $\partial W$ is a disjoint union of $-M$ and $N$.
\item The inclusion induced homomorphisms $H_1(M; \Z) \to H_1(W; \Z)$ and $H_1(N; \Z) \to H_1(W; \Z)$ are isomorphisms, and compatible with the equipped isomorphisms, that is, the following diagram commutes:
\[ \xymatrix{
H_1(M; \Z) \ar[r] \ar[rd] & H_1(W; \Z) \ar[d] & H_1(N; \Z) \ar[l] \ar [ld] \\
& \Z &
} \]
\item $H_1^\varphi(W; R) = 0$.
\end{enumerate}
Note that $H_1^\varphi(\partial W; R) = H_1^\varphi(M; R) \oplus H_1^\varphi(N; R)$, and it is equipped with the Blanchfield pairing associated with $\varphi$
\[ \bl_{-M, \varphi} \oplus \bl_{N, \varphi} \colon H_1^\varphi(\partial W; R) \times H_1^\varphi(\partial W; R) \to Q(R) / R. \]

The following proposition will be the key ingredient in the proof of the main theorem in Section~\ref{sec_main}, and proved in the following subsection.
For a matrix $A$ over $R$ we write $A_0$ for its image under the quotient map $R \to R_0$.
\begin{prop} \label{prop_comparing}
Let $M$ and $N$ be homology $S^1 \times S^2$'s, $\varphi \colon \Lambda \to R$ an admissible homomorphism with $\varphi(\Delta_M(t) \Delta_N(t)) \neq 0$, and $W$ a $\varphi$-tame cobordism from $M$ to $N$.
Then $G^\varphi(W; R)$ is a free $R$-module.
Furthermore, every hermitian matrix $A$ over $R$ representing the twisted intersection pairing associated with $\varphi$ on $G^\varphi(W; R)$ satisfies the following:
\begin{enumerate}
\item $\lambda(A)$ is isometric to $\bl_{-M, \varphi} \oplus \bl_{N, \varphi}$.
\item $A_0$ represents the intersection paring on $G(W; R_0)$.
\end{enumerate}
\end{prop}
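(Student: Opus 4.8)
The plan is to extract the algebraic content of the twisted intersection pairing on $W$ by combining Poincaré–Lefschetz duality with the vanishing $H_1^\varphi(W;R)=0$, and then to read off the two asserted properties as "boundary value" and "reduction mod $(\varphi(t)-1)$" statements respectively. First I would establish that $G^\varphi(W;R)$ is free. Since $W$ is a $4$-manifold with boundary $-M\sqcup N$, Poincaré–Lefschetz duality identifies $H_2^\varphi(W,\partial W;R)$ with $H^2_\varphi(W;R)$, and the hypothesis $H_1^\varphi(W;R)=0$ together with the universal coefficient spectral sequence (as used already in Section~\ref{sec_pairing}) forces $H^2_\varphi(W;R)\cong\overline{\hom_R(H_2^\varphi(W;R),R)}$, which is free by admissibility condition~(2). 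Because $G^\varphi(W;R)$ is defined as the cokernel of $H_2^\varphi(\partial W;R)\to H_2^\varphi(W;R)$, and this cokernel injects into $H_2^\varphi(W,\partial W;R)$ by the long exact sequence of the pair, $G^\varphi(W;R)$ is a submodule of a free module over the Noetherian UFD $R$; I would argue it is in fact free by checking it is reflexive (it is the image of $\Psi_{W,\varphi}$ restricted appropriately, hence a submodule of $\overline{\hom_R(-,R)}$, which is torsion-free, and over such $R$ finitely generated torsion-free modules of this dual form are free — this is exactly the role of hypothesis~(2)).

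Next, for part~(1), I would run the standard argument identifying the "variation" of the intersection form of a cobordism with the difference of the boundary Blanchfield pairings. Concretely, one considers the commuting ladder relating the long exact sequences of $(W,\partial W)$ in $R$-coefficients and in $Q(R)/R$-coefficients, with the vertical Poincaré duality isomorphisms; the snake-lemma boundary map $H_2^\varphi(W,\partial W;R)\to H_1^\varphi(\partial W;R)$ sends $G^\varphi(W;R)$ isomorphically onto a "Lagrangian" (a metabolizer) for $\bl_{-M,\varphi}\oplus\bl_{N,\varphi}$, and dividing the intersection form of $W$ by this Lagrangian recovers the boundary pairing. To make "$\lambda(A)$ is isometric to $\bl_{-M,\varphi}\oplus\bl_{N,\varphi}$" precise I would pick a basis of the free module $G^\varphi(W;R)$, let $A$ be the Gram matrix of the twisted intersection pairing in that basis, and use the isomorphism $H_1^\varphi(\partial W;R)\cong R^n/AR^n$ coming from the presentation $H_2^\varphi(W;R)\xrightarrow{A} H_2^\varphi(W,\partial W;R)\to H_1^\varphi(\partial W;R)\to 0$ (the cokernel is $H_1^\varphi(\partial W;R)$ because $H_1^\varphi(W;R)=0$), under which the pairing $(v,w)\mapsto \bar v^T A^{-1}w \bmod R$ matches $\bl_{-M,\varphi}\oplus\bl_{N,\varphi}$; this last identification is the usual fact that a nonsingular form presented by a hermitian matrix $A$ has linking form $\lambda(A)$, combined with the naturality of the Bockstein and Poincaré duality.

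For part~(2), the key observation is that the intersection pairing over $R_0$ is obtained from the one over $R$ simply by applying the ring map $R\to R_0=R/(\varphi(t)-1)R$ to everything, provided the relevant homology groups behave well under this reduction. I would show that $H_2^\varphi(W;R)\otimes_R R_0\to H_2(W;R_0)$ and the analogous maps on $(W,\partial W)$ are isomorphisms, using again that the relevant $\tor_1$ terms vanish: since $H_0(W;\Lambda)=\Lambda/(t-1)\Lambda$ and $R$ is a domain with $\varphi(t)\neq 1$, the same computation as in Section~\ref{sec_pairing} gives $\tor_1^\Lambda(H_0(W;\Lambda),R)=0$ and one checks the higher reductions are clean because $\varphi(t)-1$ is a nonzerodivisor on the free modules of chains. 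Then $G(W;R_0)=G^\varphi(W;R)\otimes_R R_0$ and the intersection form over $R_0$ is the mod-$(\varphi(t)-1)$ reduction of the one over $R$, so if $A$ is a Gram matrix over $R$ then $A_0$ is a Gram matrix over $R_0$; one must only note that $A_0$ might a priori present $G(W;R_0)$ with a basis coming from the reduced basis of $G^\varphi(W;R)$, which is exactly what "represents the intersection pairing on $G(W;R_0)$" means.

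The main obstacle I expect is the freeness of $G^\varphi(W;R)$ and the careful identification of its image under the connecting map with an honest Lagrangian of the boundary form: over a general Noetherian UFD (rather than a PID) one cannot simply split exact sequences, so I would need to be attentive about which torsion-free modules are forced to be free, and this is precisely where admissibility hypothesis~(2) — that $\hom_R(L,R)$ is always free — does the essential work. The compatibility of the two reductions (the $Q(R)/R$-coefficient Bockstein diagram and the $R_0$-coefficient reduction) with the single matrix $A$ is bookkeeping, but it must be done with a fixed basis throughout to ensure that the \emph{same} $A$ witnesses both~(1) and~(2).
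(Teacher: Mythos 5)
Your outline recovers the right architecture for parts~(1) and~(2): choose a basis of $G^\varphi(W;R)$, let $A$ be the Gram matrix of the twisted intersection pairing, identify $H_1^\varphi(\partial W;R)$ with $R^n/AR^n$ via the exact sequence $G^\varphi(W;R)\to H_2^\varphi(W,\partial W;R)\to H_1^\varphi(\partial W;R)\to 0$ together with Poincar\'e duality providing the dual basis on $H_2^\varphi(W,\partial W;R)$, and descend to $R_0$ using vanishing of the relevant $\tor$ groups. That matches Lemmas~\ref{lem_dual}--\ref{lem_commuting} and the body of the paper's argument. But two points need attention.

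The freeness of $G^\varphi(W;R)$, which you flag as the delicate step, is in fact a genuine gap as you have argued it. You observe that $G^\varphi(W;R)$ injects into the free module $H_2^\varphi(W,\partial W;R)$, so it is torsion-free, and then assert reflexivity (hence freeness via admissibility~(2)). Torsion-freeness does \emph{not} give reflexivity over a Noetherian UFD of Krull dimension $\geq 2$ --- which is exactly the generality here, including $R=\Lambda$ itself: the ideal $(x,y)\subset k[x,y]$ is torsion-free and sits inside a free module, yet its double dual is the whole ring. Being a submodule of something of the form $\overline{\hom_R(L,R)}$ does not make a module a dual, and nothing in your sketch identifies $G^\varphi(W;R)$ as one. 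The paper closes this gap with a different and topologically-loaded argument (Lemma~\ref{lem_homology2}(2)): because $M$ and $N$ are $0$-surgeries on knots in homology spheres, the torsion modules $H_1^\varphi(M;R)$ and $H_1^\varphi(N;R)$ admit nonsingular \emph{square} presentation matrices. Feeding $B\oplus C$ into the short exact sequence $0\to G^\varphi(W;R)\to H_2^\varphi(W,\partial W;R)\to H_1^\varphi(\partial W;R)\to 0$, comparing any minimal presentation to $B\oplus C$ via the Tietze-type moves of \cite[Theorem~6.1]{Lic97}, and invoking the five lemma forces $G^\varphi(W;R)$ to be free of the same rank as $H_2^\varphi(W,\partial W;R)$. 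Your proposal never surfaces this ingredient, and without it the freeness claim is unsubstantiated.

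Secondarily, the sentence asserting that the connecting map $H_2^\varphi(W,\partial W;R)\to H_1^\varphi(\partial W;R)$ carries $G^\varphi(W;R)$ \emph{isomorphically onto a Lagrangian} is false on its face: by the long exact sequence of the pair (and $H_1^\varphi(W;R)=0$), $G^\varphi(W;R)$ is precisely the \emph{kernel} of that map, so its image is zero. The subsequent paragraph abandons this picture and follows the correct route, so this is a conceptual slip rather than a load-bearing error, but it indicates the heuristic you started from was off.
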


\subsection{Proof of Proposition~\ref{prop_comparing}}

Let $M$ be a homology $S^1 \times S^2$.
The following lemma is standard.

\begin{lem} \label{lem_homology1}
The following hold:
\begin{enumerate}
\item $H_1^\varphi(M; R)$ is a torsion $R$-module, and multiplication by $(\varphi(t) - 1)$ defines an automorphism on $H_1^\varphi(M; R)$.
\item $H_2^\varphi(M; R)$ is isomorphic to $R_0$, and the homomorphism $H_2^\varphi(M; R) \otimes_R R_0 \to H_2(M; R_0)$ is an isomorphism.
\end{enumerate}
\end{lem}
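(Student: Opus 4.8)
The plan is to analyze the twisted chain complex of $M$ directly, using the fact that $M$ is a homology $S^1 \times S^2$ together with the admissibility hypotheses on $\varphi$.

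First I would establish part (1). Since $M$ is a homology $S^1 \times S^2$, we have $H_*(M; \Q) \cong H_*(S^1 \times S^2; \Q)$, and $H_1(M; \Lambda)$ is a torsion $\Lambda$-module with order $\Delta_M(t)$; here I would use that the infinite cyclic cover of $M$ has the rational homology of a point in degree $1$ up to torsion, or argue via Reidemeister torsion as the authors suggest. Concretely, $H_1(M; \Lambda)$ is $\Lambda$-torsion, and applying $- \otimes_\Lambda R$ — via the universal coefficient spectral sequence, exactly as in the subsection on twisted homology where $\tor_1^\Lambda(H_0(M; \Lambda), R) = 0$ because $\varphi(t) \neq 1$ — one gets $H_1^\varphi(M; R) \cong H_1(M; \Lambda) \otimes_\Lambda R$, which is $R$-torsion with order $\varphi(\Delta_M(t)) \neq 0$. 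For the automorphism statement: $(\varphi(t) - 1)$ kills the cokernel of multiplication by $(t-1)$ on $H_1(M;\Lambda)$, which is $H_0$ of the cover; since that cokernel is $\Lambda/(t-1)\Lambda$ and tensoring with $R$ and then noting $\tor$ vanishes, one sees $(\varphi(t)-1)$ acts invertibly. Alternatively, and more cleanly: the $\Z$-homology computation $H_1(M;\Z) = \Z$ generated by the element mapping to $1 \in \Z$ means that $H_1(M;\Z_0)$-type reductions force the $(\varphi(t)-1)$-torsion and cotorsion of $H_1^\varphi(M;R)$ to vanish, so multiplication by $(\varphi(t)-1)$ is injective on a torsion module of finite length, hence bijective.

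Next, for part (2), I would compute $H_2^\varphi(M; R)$. The key input is Poincaré duality for the closed $3$-manifold $M$: $H_2^\varphi(M; R) \cong H^1_\varphi(M; R)$, and by the universal coefficient spectral sequence together with the evaluation map, $H^1_\varphi(M;R)$ fits in a sequence relating it to $\overline{\hom_R(H_1^\varphi(M;R), R)}$ and $\ext^1_R$-terms. Since $H_1^\varphi(M; R)$ is $R$-torsion (part (1)), $\hom_R(H_1^\varphi(M;R), R) = 0$, so the $\ext$ and lower-degree contributions survive; one should get $H^1_\varphi(M;R) \cong H^0_\varphi(M; \text{something})$ or, more directly, $H_2^\varphi(M;R) \cong H_\varphi^1(M; R)$ and then a second application of duality/UCT relates this to $H_1^\varphi(M; R)$'s dual plus the free part coming from $H_0$. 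The cleanest route: by Poincaré duality $H_2^\varphi(M;R) \cong H^1_\varphi(M;R)$, and since $H_0^\varphi(M; R) = R_0$ and $H_1^\varphi(M;R)$ is torsion, the universal coefficient spectral sequence collapses to give $H^1_\varphi(M;R) \cong \overline{\hom_R(H_0^\varphi(M;R), R)} \oplus \overline{\ext^1_R(H_{?}, R)}$; one needs $\hom_R(R_0, R)$, and by admissibility condition (2) this is a free $R$-module — and since $R$ is a domain and $R_0 = R/(\varphi(t)-1)R$ with $\varphi(t) - 1 \neq 0$, it is in fact the free module of rank... this requires care, but the expected answer $H_2^\varphi(M;R) \cong R_0$ should drop out. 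Then the naturality of the UCSS gives the comparison map $H_2^\varphi(M;R) \otimes_R R_0 \to H_2(M;R_0)$, and one checks it is an isomorphism by the same argument applied to the coefficient ring $R_0$ (noting $H_2(S^1 \times S^2; \Z) = \Z$ globally, which via the homology $S^1\times S^2$ hypothesis pins down $H_2(M; R_0) \cong R_0$).

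The main obstacle I anticipate is the precise bookkeeping in part (2): juggling Poincaré duality, the universal coefficient spectral sequence over the possibly-non-PID ring $R$, the involution $\bar{\cdot}$, and the admissibility hypothesis that forces the relevant $\hom_R$-terms to be free. In particular one must be careful that $\hom_R(R_0, R)$ behaves well — this is exactly where admissibility condition (2) is used — and that the identification $H_2^\varphi(M;R) \cong R_0$ is compatible with reduction mod $(\varphi(t)-1)$. Part (1) I expect to be genuinely routine given the earlier $\tor$-vanishing observation; the substance of the lemma is the freeness and rank-one computation of $H_2^\varphi(M;R)$ together with the base-change isomorphism, which is what feeds into Proposition~\ref{prop_comparing}.
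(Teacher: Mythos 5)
Your proof of part (1) has a genuine gap: you invoke ``$R$-torsion with order $\varphi(\Delta_M(t)) \neq 0$,'' but the nonvanishing of $\varphi(\Delta_M(t))$ is \emph{not} among the hypotheses of the lemma, and it does not follow merely from $M$ being a homology $S^1\times S^2$. (It is in fact automatic whenever $\varphi(t)-1$ is a nonunit of $R$, since $\Delta_M(1)=\pm 1$ forces $\varphi(\Delta_M(t))\equiv \pm 1 \bmod (\varphi(t)-1)$, but this observation needs to be made rather than assumed.) The paper avoids the issue entirely: it runs the universal coefficient spectral sequence $\tor_p^R(H_q^\varphi(M;R),R_0)\Rightarrow H_{p+q}(M;R_0)$, uses $H_1(M;R_0)\cong R_0$ together with $E^2_{1,0}\cong R_0$ to conclude $H_1^\varphi(M;R)\otimes_R R_0=0$, and then deduces torsion \emph{and} bijectivity of multiplication by $\varphi(t)-1$ from a Nakayama-style determinant trick --- no hypothesis on $\varphi(\Delta_M(t))$ is invoked. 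Your alternative ``injective on a torsion module of finite length, hence bijective'' is not justified: finitely generated torsion modules over a Noetherian UFD need not have finite length, and neither the $(\varphi(t)-1)$-torsion nor the cotorsion vanishing is actually proved in your sketch; both follow only after the spectral-sequence step.

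In part (2) there is a computational error, not merely vagueness. After Poincar\'e duality identifies $H_2^\varphi(M;R)\cong H_\varphi^1(M;R)$, the universal coefficient spectral sequence $E_2^{p,q}=\overline{\ext_R^q(H_p^\varphi(M;R),R)}\Rightarrow H_\varphi^{p+q}(M;R)$ contributes to $H^1_\varphi$ exactly the two terms $E_2^{1,0}=\overline{\hom_R(H_1^\varphi(M;R),R)}$ and $E_2^{0,1}=\overline{\ext_R^1(H_0^\varphi(M;R),R)}$. The first vanishes because $H_1^\varphi(M;R)$ is torsion (part (1)); the second is $\overline{\ext_R^1(R_0,R)}\cong R_0$, computed directly from the free resolution $0\to R\xrightarrow{\varphi(t)-1}R\to R_0\to 0$. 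You instead wrote the decomposition with $\overline{\hom_R(H_0^\varphi(M;R),R)}=\hom_R(R_0,R)$ appearing and then worried about its freeness via admissibility condition (2); but $\hom_R(R_0,R)=0$ (as $R_0$ is torsion and $R$ torsion-free), and in any case this group does not occur in degree $1$. So the ``bookkeeping'' you flag as the obstacle is in fact where your sketch goes off the rails. For the base-change statement $H_2^\varphi(M;R)\otimes_R R_0\to H_2(M;R_0)$, the paper does not repeat the same computation over $R_0$ as you suggest; rather it returns to the $\tor$-spectral sequence and checks $E^2_{2,1}=0$ (length-one resolution of $R_0$) and $E^2_{1,1}=\tor_1^R(H_1^\varphi(M;R),R_0)=0$, the latter being precisely the $(\varphi(t)-1)$-torsion-freeness of $H_1^\varphi(M;R)$ from part (1). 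Your proposal omits this dependency.
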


\begin{proof}
We first show (1).
We consider the universal coefficient spectral sequence $E_{p, q}^2 = \tor_p^R(H_q^\varphi(M; R), R_0)$ with the differential map $d^2 \colon E_{p, q}^2 \to E_{p - 2, q + 1}^2$ converging to $H_{p + q}(M; R_0)$ \cite[Theorem 10.90]{Ro09}.
Since $R_0$ admits a free resolution of length $1$, we have $E_{2, q}^2 = 0$ for all $q$.
In particular, the differential map $d^2 \colon E_{2, 0}^2 \to E_{0, 1}^2$ is the $0$-map.
Hence $H_1(M; R_0)$, which is isomorphic to $R_0$, contains $H_1^\varphi(M; R) \otimes_R R_0$ and $E_2^{1, 0}$ as direct summands.
Here since $H_0^\varphi(M; R)$ is isomorphic to $R_0$, so is $E_2^{1, 0}$.
It thus follows that $H_1^\varphi(M; R) \otimes_R R_0 = 0$, which shows that multiplication by $(\varphi(t) - 1)$ defines an automorphism on $H_1^\varphi(M; R)$.
Let $x_1$, $\dots$, $x_m$ be generators of $H_1^\varphi(M; R)$.
Then we may write $x_i = (\varphi(t)-1) \sum_{j=1}^m \lambda_{ij} x_j$ for some $\lambda_{ij} \in R$.
Rearranging this we may write $\sum_{j=1}^m \mu_{ij} x_j = 0$ for $1 \leq i \leq m$, where $\mu_{ij} = (\varphi(t)-1) \lambda_{ij} - \delta_{ij}$.
We set $\Delta = \det (\mu_{ij}) \in R$.
Then $\Delta x_i = 0$ for $1 \leq i \leq m$, and $\Delta H_1^\varphi(M; \R) = 0$.
Since $\Delta$ is equal to $\det (-\delta_{ij}) = (-1)^m$ modulo $(\varphi(t) - 1)$, $\Delta \neq 0$, which shows that $H_1^\varphi(M; R)$ is a torsion $R$-module.

We next show (2).
Poincar\'e duality implies that $H_2^\varphi(M; R)$ is isomorphic to $H_\varphi^1(M; R)$.
We consider the universal coefficient spectral sequence $E_2^{p, q} = \overline{\ext_R^q(H_p^\varphi(M; R), R)}$ with the differential map $d_2 \colon E_2^{p, q} \to E_2^{p - 1, q + 2}$ converging to $H_\varphi^{p + q}(M; R)$ \cite[Theorem 2.3]{Le77}.
Note that here we need the involuted $R$-structure of $\ext_R^q(H_p^\varphi(M; R), R)$ since we apply the universal coefficient spectral sequence to the chain complex $\overline{C_*(\widetilde{M})} \otimes_\Lambda R$ and then the cochain complex $\hom_\Lambda(C_*(\widetilde{M}), R)$ is identified with $\hom_R(\overline{C_*(\widetilde{M})} \otimes_\Lambda R, R)$ with the involuted $R$-structure by the Kronecker pairing.
Since $H_0^\varphi(M; R)$ is isomorphic to $R_0$, so is $E_2^{0, 1}$.
By (1) we have $E_2^{1, 0} = 0$.
Thus $H_2^\varphi(M; R)$ is isomorphic to $R_0$.
Now we again consider the universal coefficient spectral sequence $E_{p, q}^2 = \tor_p^R(H_q^\varphi(M; R), R_0)$ converging to $H_{p + q}(M; R_0)$.
Recall that we have $E_{2, q}^2 = 0$ for all $q$.
In particular, the differential map $d^2 \colon E_{2, 1}^2 \to E_{0, 2}^2$ is the $0$-map.
Also, since $H_1^\varphi(M; \R)$ is $(\varphi(t)-1)$-torsion free by (1), $E_{1, 1}^2 = 0$.
Thus the homomorphism $H_2^\varphi(M; R) \otimes_R R_0 \to H_2(M; R_0)$ is an isomorphism.
\end{proof}

Let $M$ and $N$ be homology $S^1 \times S^2$'s, $\varphi \colon \Lambda \to R$ an admissible homomorphism with $\varphi(\Delta_M(t) \Delta_N(t)) \neq 0$, and $W$ a $\varphi$-tame cobordism $W$ from $M$ to $N$.

\begin{lem} \label{lem_dual}
The following hold:
\begin{enumerate}
\item The evaluation map $H_\varphi^2(W; R) \to \overline{\hom_R(H_2^\varphi(W; R), R)}$ induced by the Kronecker pairing $\kappa$ is an isomorphism.
\item The homomorphism $\hom_\Lambda(G^\varphi(W; R), R) \to \hom_R(H_2^\varphi(W; R), R)$ induced by the quotient map $H_2^\varphi(W; R) \to G^\varphi(W; R)$ is an isomorphism.
\end{enumerate}
\end{lem}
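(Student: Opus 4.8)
The plan is to treat the two statements in order, using the universal coefficient spectral sequences together with the tameness hypotheses, and then to exploit the splitting coming from the boundary.

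For part~(1), I would run the universal coefficient spectral sequence $E_2^{p, q} = \overline{\ext_R^q(H_p^\varphi(W; R), R)}$ converging to $H_\varphi^{p + q}(W; R)$, exactly as in the proof of Lemma~\ref{lem_homology1}(2), keeping track of the involuted $R$-structure coming from the Kronecker pairing. The edge homomorphism $E_2^{2, 0} \to H_\varphi^2(W; R)$ is precisely the evaluation map, so it suffices to show that $E_2^{p, q} = 0$ whenever $p + q = 2$ and $(p, q) \neq (2, 0)$, and that the relevant differential into $E_2^{2,0}$ vanishes. The term $E_2^{1, 1} = \overline{\ext_R^1(H_1^\varphi(W; R), R)}$ vanishes because $H_1^\varphi(W; R) = 0$ by $\varphi$-tameness. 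The term $E_2^{0, 2} = \overline{\ext_R^2(H_0^\varphi(W; R), R)}$: since $H_0^\varphi(W; R) \cong R_0$ admits a free resolution of length $1$, all higher $\ext$ vanish, so this is $0$. The incoming differential $d_2 \colon E_2^{2,0} \to$ (a term already shown to vanish, namely $E_2^{1,2}$, which is $\overline{\ext_R^2(H_1^\varphi(W;R), R)} = 0$) and the outgoing differential likewise land in or come from zero groups; hence $H_\varphi^2(W; R) \cong E_2^{2, 0} = \overline{\ext_R^0(H_2^\varphi(W; R), R)} = \overline{\hom_R(H_2^\varphi(W; R), R)}$, and one checks this identification is the evaluation map. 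Here I also use admissibility condition~(2) to know that $\hom_R(H_2^\varphi(W;R), R)$ is free, which is what makes the target well-behaved.

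For part~(2), I would use the defining exact sequence of $G^\varphi(W; R)$. By definition $G^\varphi(W; R)$ is the cokernel of $H_2^\varphi(\partial W; R) \to H_2^\varphi(W; R)$, so we have an exact sequence
\[ H_2^\varphi(\partial W; R) \to H_2^\varphi(W; R) \to G^\varphi(W; R) \to 0. \]
Applying $\hom_R(-, R)$ gives a left-exact sequence
\[ 0 \to \hom_R(G^\varphi(W; R), R) \to \hom_R(H_2^\varphi(W; R), R) \to \hom_R(H_2^\varphi(\partial W; R), R), \]
so the map in the statement is automatically injective; the content is surjectivity, i.e.\ that every homomorphism $H_2^\varphi(W; R) \to R$ kills the image of $H_2^\varphi(\partial W; R)$. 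By Lemma~\ref{lem_homology1}(1), $H_1^\varphi(M; R)$ and $H_1^\varphi(N; R)$ are torsion $R$-modules, so $H_2^\varphi(\partial W; R) \to H_2^\varphi(W, \partial W; R)$ followed by Poincaré–Lefschetz duality and the half-lives-half-dies type argument shows the image of $H_2^\varphi(\partial W; R)$ in $H_2^\varphi(W; R)$ is a torsion submodule. Concretely: since $H_1^\varphi(\partial W; R)$ is torsion, the image of $H_2^\varphi(\partial W; R)$ in $H_2^\varphi(W; R)$ maps to a torsion submodule of $H_2^\varphi(W, \partial W; R)$, and via the isomorphism of part~(1) its pairing against any element of $H_2^\varphi(W;R)$ lands in the torsion submodule of $R$, which is $0$ since $R$ is a domain. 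Therefore every element of $\hom_R(H_2^\varphi(W;R), R)$ factors through $G^\varphi(W;R)$, giving surjectivity.

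I expect the main obstacle to be part~(2): making rigorous the claim that the image of $H_2^\varphi(\partial W; R)$ in $H_2^\varphi(W; R)$ is a torsion submodule, and hence annihilated by every $R$-linear functional. The clean way is to chase the commutative diagram relating the homology long exact sequence of $(W, \partial W)$, Poincaré–Lefschetz duality, and the Kronecker evaluation maps, using that $H_1^\varphi(\partial W; R)$ is $R$-torsion (Lemma~\ref{lem_homology1}(1)) to conclude that the "boundary part" of $H_2^\varphi(W; R)$ is torsion while $H_2^\varphi(W; R)$ modulo that part injects into a free module. A subtle point to get right is the bookkeeping of the involution on the $R$-module structures throughout, since both duality isomorphisms and the Kronecker pairing introduce $\bar{\cdot}$, and I would be careful to state everything with $\overline{\hom_R(-, R)}$ as in the rest of Section~\ref{sec_pairing}. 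Once both parts are in hand, composing the two isomorphisms yields the evaluation-type isomorphism $H_\varphi^2(W; R) \cong \overline{\hom_R(G^\varphi(W; R), R)}$ needed to extract a representing hermitian matrix for the intersection pairing on $G^\varphi(W; R)$ in Proposition~\ref{prop_comparing}.
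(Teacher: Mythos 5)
Your proof of part~(1) is essentially the paper's proof: the same universal coefficient spectral sequence $E_2^{p,q} = \overline{\ext_R^q(H_p^\varphi(W;R),R)}$, the same vanishing of $E_2^{1,1}$ and $E_2^{0,2}$, and the same observation that the $d_2$ hitting the $(2,0)$-spot lands in $E_2^{1,2}=0$. (One small terminological slip: the differential $d_2\colon E_2^{2,0}\to E_2^{1,2}$ is the outgoing one at $(2,0)$, not incoming; the incoming one has source in negative $q$ and so is trivially zero. The conclusion is unaffected.)

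Part~(2) reaches the correct conclusion but the argument you supply is not right as stated, and indeed you flag yourself that this is where you are uncertain. You invoke Lemma~\ref{lem_homology1}\emph{(1)} (that $H_1^\varphi(M;R)$ and $H_1^\varphi(N;R)$ are $R$-torsion) together with a Poincar\'e--Lefschetz / ``half-lives-half-dies'' detour to try to show that the image of $H_2^\varphi(\partial W;R)$ in $H_2^\varphi(W;R)$ is torsion. As written this does not hold together: the image of $H_2^\varphi(\partial W;R)$ in $H_2^\varphi(W;R)$ lies in the kernel of the map to $H_2^\varphi(W,\partial W;R)$, so the claim that it ``maps to a torsion submodule of $H_2^\varphi(W,\partial W;R)$'' is vacuous, and the torsion-freeness of $H_1^\varphi(\partial W;R)$ does not by itself control $H_2^\varphi(\partial W;R)$. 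The step you are missing is elementary and is Lemma~\ref{lem_homology1}\emph{(2)}: it says directly that $H_2^\varphi(M;R)\cong R_0$ (and similarly for $N$), so $H_2^\varphi(\partial W;R)\cong R_0\oplus R_0$ is itself an $R$-torsion module. Hence \emph{any} $R$-linear map $H_2^\varphi(W;R)\to R$ kills the image of $H_2^\varphi(\partial W;R)$ (a torsion element must go to $0$ in the domain $R$) and therefore factors through the cokernel $G^\varphi(W;R)$; combined with left-exactness of $\hom_R(-,R)$, which you correctly noted gives injectivity, this yields the isomorphism. No duality argument is needed, and your eventual punch line (``torsion pairs to torsion in $R$, which is zero'') is the right one — you just need to get there by observing the torsion property of $H_2^\varphi(\partial W;R)$ itself rather than trying to extract it from $H_1^\varphi$.
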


\begin{proof}
We first show (1).
As in the proof of Lemma~\ref{lem_homology1} we consider the universal coefficient spectral sequence $E_2^{p, q} = \overline{\ext_R^q(H_p^\varphi(W; R), R)}$ with the differential map $d_2 \colon E_2^{p, q} \to E_2^{p - 1, q + 2}$ converging to $H_\varphi^{p + q}(W; R)$ \cite[Theorem 2.3]{Le77}.
Since $H_0^\varphi(W; R)$ is isomorphic to $R_0$, which admits a free resolution of length $1$, we have $E_2^{0, 2} = 0$.
Since $H_1(W; \Lambda) = 0$, we see at once that $E_2^{1, q} = 0$ for all $q$.
In particular, we have $E_2^{1, 1} = 0$, and see that the differential map $d_2 \colon E_2^{2, 0} \to E_2^{1, 2}$ is the $0$-map.
Thus (1) follows.

Since $H_2^\varphi(\partial W; R) = H_2^\varphi(M; R) \oplus H_2^\varphi(N; R)$ is a torsion module by Lemma~\ref{lem_homology1} (2), every homomorphism $H_2^\varphi(W; R) \to R$ induces a homomorphism $G^\varphi(W; R) \to R$, and (2) follows.
\end{proof}

We now have the following sequence of the isomorphisms:
\[ D_{W, \varphi} \colon H_2^\varphi(W, \partial W; R) \to H_\varphi^2(W; R) \to \overline{\hom_R(H_2^\varphi(W; R), R)} \xleftarrow{\cong} \overline{\hom_R(G^\varphi(W; R), R)}, \]
Here the isomorphisms are as follows:
\begin{enumerate}
\item the first one comes from Poincar\'e duality;
\item the second one is the isomorphism as in Lemma~\ref{lem_dual} (1);
\item third ones is the isomorphism as in Lemma~\ref{lem_dual} (2).
\end{enumerate}

\begin{lem} \label{lem_homology2}
The following hold:
\begin{enumerate}
\item $H_2^\varphi(W, \partial W; R)$ is a free $R$-module.
\item $G^\varphi(W; R)$ is a free $R$-module.
\item The homomorphism $G^\varphi(W; R) \otimes_R R_0 \to G(W; R_0)$ is an isomorphism.
\end{enumerate}
\end{lem}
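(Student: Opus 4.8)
The plan is to deduce (1) directly from the duality isomorphism $D_{W,\varphi}$ together with admissibility, deduce (3) by a universal-coefficient comparison of right-exact sequences, and reserve the real work for (2). Note first that $G^\varphi(W;R)$ is finitely generated over $R$, being a quotient of $H_2^\varphi(W;R)$, which is finitely generated since $W$ is a compact manifold and $R$ is Noetherian. For (1), recall that $D_{W,\varphi}$ is an isomorphism $H_2^\varphi(W, \partial W; R) \xrightarrow{\cong} \overline{\hom_R(G^\varphi(W;R), R)}$; by admissibility condition (2) the module $\hom_R(G^\varphi(W;R), R)$ is free, hence so is its involuted module, hence so is $H_2^\varphi(W, \partial W; R)$.

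For (3), tensor the right-exact sequence $H_2^\varphi(\partial W; R) \to H_2^\varphi(W; R) \to G^\varphi(W; R) \to 0$ with $R_0$ and map it, via the natural homomorphisms, to the right-exact sequence $H_2(\partial W; R_0) \to H_2(W; R_0) \to G(W; R_0) \to 0$ defining $G(W; R_0)$. It suffices to check that the first two vertical maps $H_2^\varphi(\partial W; R) \otimes_R R_0 \to H_2(\partial W; R_0)$ and $H_2^\varphi(W; R) \otimes_R R_0 \to H_2(W; R_0)$ are isomorphisms, since then the five lemma for cokernels forces the third one to be an isomorphism as well. The first is an isomorphism by Lemma~\ref{lem_homology1}(2) applied to $M$ and to $N$. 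For the second, since $R_0 = R/(\varphi(t)-1)R$ admits a free $R$-resolution of length one, the universal coefficient spectral sequence collapses to a short exact sequence $0 \to H_2^\varphi(W;R) \otimes_R R_0 \to H_2(W; R_0) \to \tor_1^R(H_1^\varphi(W;R), R_0) \to 0$, whose right-hand term vanishes because $H_1^\varphi(W;R) = 0$.

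For (2), set $F = H_2^\varphi(W, \partial W; R)$, which is free by (1), and $T = H_1^\varphi(\partial W; R) = H_1^\varphi(M;R) \oplus H_1^\varphi(N;R)$, which is a torsion $R$-module by Lemma~\ref{lem_homology1}(1). The homology long exact sequence of $(W, \partial W)$ together with $H_1^\varphi(W;R) = 0$ gives a short exact sequence $0 \to G^\varphi(W;R) \to F \to T \to 0$; in particular $G^\varphi(W;R)$ is torsion-free, so the natural map $G^\varphi(W;R) \to G^\varphi(W;R)^{\ast\ast}$ is injective, where $(-)^\ast = \hom_R(-, R)$. Since $G^\varphi(W;R)^\ast$ is free by admissibility, so is $G^\varphi(W;R)^{\ast\ast}$, and it therefore suffices to prove that $G^\varphi(W;R)$ is reflexive. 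Applying $(-)^\ast$ to the short exact sequence, and using that $F$ is free and $T$ is torsion, gives $0 \to F^\ast \to G^\varphi(W;R)^\ast \to \ext^1_R(T, R) \to 0$; applying $(-)^\ast$ again, and using that $G^\varphi(W;R)^\ast$ is free and $\ext^1_R(T,R)$ is torsion, gives $0 \to G^\varphi(W;R)^{\ast\ast} \to F \to \ext^1_R(\ext^1_R(T, R), R) \to 0$. These two sequences are compatible through $G^\varphi(W;R) \to G^\varphi(W;R)^{\ast\ast}$, the identity on $F$, and the natural biduality map $T \to \ext^1_R(\ext^1_R(T, R), R)$; consequently $G^\varphi(W;R)$ is reflexive if and only if this biduality map is an isomorphism.

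Finally, the biduality map for $T$ is an isomorphism because $T$ carries the non-singular hermitian pairing $\bl_{-M, \varphi} \oplus \bl_{N, \varphi}$ valued in $Q(R)/R$. Indeed, for a finitely generated torsion module over the Noetherian domain $R$ there is a natural identification $\hom_R(-, Q(R)/R) \cong \ext^1_R(-, R)$ coming from the sequence $0 \to R \to Q(R) \to Q(R)/R \to 0$, since $Q(R)$ is an injective $R$-module; and the formal identity relating the adjoint of a hermitian form to its $R$-dual shows that a module equipped with a non-singular hermitian pairing into $Q(R)/R$ is reflexive with respect to the corresponding duality, which translates into biduality with respect to $\ext^1_R(-, R)$. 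I expect this last step — establishing, with the correct bookkeeping of the involution, that non-singularity of the Blanchfield pairing of $\partial W$ forces the biduality map of $H_1^\varphi(\partial W; R)$ to be an isomorphism (equivalently, that $H_1^\varphi(\partial W;R)$ has no nonzero submodule $K$ with $\hom_R(K,R) = \ext^1_R(K,R) = 0$) — to be the main obstacle; assertion (1), assertion (3), and the reduction of (2) to this biduality statement are routine homological algebra.
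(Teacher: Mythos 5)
Your treatment of (1) and (3) matches the paper's essentially move for move: (1) follows from the isomorphism $D_{W,\varphi}$ together with admissibility, and (3) follows from the two comparison isomorphisms over $R_0$, the second of which you also establish via the collapse of the universal coefficient spectral sequence using $R_0 = R/(\varphi(t)-1)R$ and $H_1^\varphi(W;R)=0$; the paper packages the cokernel comparison slightly differently but the content is identical.

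For (2), however, you take a genuinely different route from the paper, and this is where the interesting comparison lies. The paper's argument is concrete and linear-algebraic: choosing a minimal generating set of $G^\varphi(W;R)$ and a basis of the free module $H_2^\varphi(W,\partial W;R)$ produces a presentation matrix $A$ for $T := H_1^\varphi(\partial W;R)$; the crucial geometric input is that $T = H_1^\varphi(M;R)\oplus H_1^\varphi(N;R)$ admits a \emph{nonsingular square} presentation matrix $B\oplus C$ (because $M$ and $N$ are $0$-surgeries on knots in homology spheres, so their Alexander modules come from Seifert matrices); Lickorish's four elementary moves relating presentation matrices then force $A$ to be square with $\det A = \det B\det C\neq 0$, so the five lemma gives $R^r\cong G^\varphi(W;R)$. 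Your approach instead goes through abstract duality: $G^\varphi(W;R)$ is torsion-free and $G^{\ast\ast}$ is free by admissibility, so freeness reduces to reflexivity, which you reduce further to the natural biduality map of $T$ with respect to $D(-)=\overline{\ext^1_R(-,R)}\cong\overline{\hom_R(-,Q(R)/R)}$ being an isomorphism; finally you want to extract that from the non-singularity of the Blanchfield pairing. This is a legitimate alternative, and the final step can in fact be carried out: writing $\alpha\colon T\to D(T)$ for the adjoint of the non-singular hermitian pairing $\bl_{-M,\varphi}\oplus\bl_{N,\varphi}$, the hermitian condition gives $D(\alpha)\circ\eta_T=\alpha$ on the nose, so $\eta_T=D(\alpha)^{-1}\circ\alpha$ is an isomorphism. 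But as you yourself flag, you have not actually carried this out, and it is the heart of (2); moreover to make it airtight one must also justify that $Q(R)$ is injective over a Noetherian domain (true, since $Q(R)=E(R)$), that $\ext^1_R(T,R)$ is torsion (so that one more $\hom_R(-,R)$ vanishes), and that the map on cokernels in your comparison of the two short exact sequences really is the natural biduality transformation rather than some other map to $\ext^1(\ext^1(T))$. So: your plan is a more structural, duality-theoretic reformulation that bypasses the Seifert/surgery presentation-matrix argument entirely, at the cost of replacing the paper's elementary explicit computation by Matlis-type injectivity and a careful sesquilinear bookkeeping argument which you acknowledge you have not supplied. As a completed argument it would be correct but heavier machinery; as written it has the stated gap exactly where you predicted it.
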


\begin{proof}
We have the isomorphism $D_{W, \varphi} \colon H_2^\varphi(W, \partial W; R) \to \overline{\hom_R(G^\varphi(W; R), R)}$.
It follows from the second property of the admissible homomorphism $\varphi$ that these $R$-modules are free, which shows (1).

We next show (2).
Since $H_1^{\varphi}(W; R) = 0$, the homology long exact sequence for $(W, \partial W)$ descends to the following short exact sequence:
\[ 0 \to G^\varphi(W; R) \to H_2^\varphi(W, \partial W; R) \to H_1^\varphi(\partial W; R) \to 0.  \] 
By (1) we can find an isomorphism $f \colon R^r \to H_2^\varphi(W, \partial W; R)$, where $r = \rank_R H_2^\varphi(W, \partial W; R)$.
Let $g \colon R^s \to G^\varphi(W; R)$ be an epimorphism such that $s$ is minimal among such epimorphisms.
Thus we have a presentation matrix $A$ of $H_1^\varphi(\partial W; R)$ so that the following diagram commutes:
\[ \xymatrix{
& R^s \ar[r]^{A \cdot} \ar[d]_g & R^r \ar[r] \ar[d]_f & H_1^\varphi(\partial W; R) \ar[r] \ar@{=}[d] & 0 \\
0 \ar[r] & G^\varphi(W; R) \ar[r] & H_2^\varphi(W, \partial W; R) \ar[r] & H_1^\varphi(\partial W; R) \ar[r] & 0
} \]

It is well-known that $H_1(M_J; \Lambda)$, isomorphic to $H_1(X_J; \Lambda)$, for a knot $J$ in $S^3$ admits a nonsingular square presentation matrix, and so does $H_1^\varphi(M_J; R)$, isomorphic to $H_1(M_J; \Lambda) \otimes_\Lambda R$.
Since $M$ and $N$ can be seen as the results of $0$-framed surgery of a homology $3$-sphere $\Sigma$ along some knots in $\Sigma$, similarly, $H_1(M; \Lambda)$ and $H_1(N; \Lambda)$ admits a nonsingular square presentation matrix, and so do $H_1^\varphi(M; R)$ and $H_1^\varphi(N; R)$, isomorphic to $H_1(M; \Lambda) \otimes_\Lambda R$ and $H_1(N; \Lambda) \otimes_\Lambda R$ respectively.
Let $B$ and $C$ be such presentation matrices of $H_1^\varphi(M; R)$ and $H_1^\varphi(N; R)$.
Then $B \oplus C$ is another presentation matrix of $H_1^\varphi(\partial W; R) = H_1^\varphi(M; R) \oplus H_1^\varphi(N; R)$.
Two presentation matrices of a same module can be transformed into each other by the following $4$ operations and their inverses~\cite[Theorem 6.1]{Lic97}:
\begin{enumerate}
\item Permutation of rows and columns;
\item Replacement of the matrix $P$ by $P \oplus (1)$;
\item Addition of an extra column of zeros to the matrix;
\item Addition of a scalar multiple of a row (or column) to another row (or column).
\end{enumerate}
It thus follows that the difference between the minimal number of generators of the module generated by columns of $A$ and the number of rows of $A$ is equal to that for $B \oplus C$, which is $0$.
Hence we have $s = r$, and so $A$ is a square matrix.
Since $\det A = \det B \det C \neq 0$, we have the following commutative diagram of short exact sequences:
\[ \xymatrix{
0 \ar[r] & R^r \ar[r]^{A \cdot} \ar[d]_g & R^r \ar[r] \ar[d]_f & H_1^\varphi(\partial W; R) \ar[r] \ar@{=}[d] & 0 \\
0 \ar[r] & G^\varphi(W; R) \ar[r] & H_2^\varphi(W, \partial W; R) \ar[r] & H_1^\varphi(\partial W; R) \ar[r] & 0
} \]
It now follows from the five lemma that $g$ is an isomorphism, and (2) follows.

Finally, we show (3).
It suffices to see that the homomorphisms $H_2^\varphi(\partial W; R) \otimes_R R_0 \to H_2(\partial W; R_0)$ and $H_2^\varphi(W; R) \otimes_R R_0 \to H_2(W; R_0)$ are isomorphisms.
By Lemma~\ref{lem_homology1} (2) the first one is an isomorphism.
We consider the universal coefficient spectral sequence $E_{p, q}^2 = \tor_p^R(H_q^\varphi(W; R), R_0)$ with the differential map $d^2 \colon E_{p, q}^2 \to E_{p - 2, q + 1}^2$ converging to $H_{p + q}(W; R_0)$ \cite[Theorem 10.90]{Ro09}.
Since $R_0$ admits a free resolution of length $1$, we have $E_{2, 0}^2 = E_{2, 1}^2 = 0$.
In particular, the differential map $d_2 \colon E_{2, 1}^2 \to E_{0, 2}^2$ is the $0$-map.
Also, since $H_1^\varphi(W; R) = H_1^\varphi(M; R) \oplus H_1^\varphi(N; R)$ is $(\varphi(t) -1)$-torsion free by Lemma~\ref{lem_homology1} (1), $E_{1, 1}^2 = 0$.
Thus the homomorphism $H_2^\varphi(W; R) \otimes_R R_0 \to H_2(W; R_0)$ is an isomorphism, and (3) follows.
\end{proof}

\begin{lem} \label{lem_h2}
The homomorphism $H_\varphi^2(W, \partial W; Q(R)) \to H_\varphi^2(W; Q(R))$ is an isomorphism.
\end{lem}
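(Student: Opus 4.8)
The plan is to extract the statement from the long exact sequence of the pair $(W,\partial W)$ in $\varphi$-twisted cohomology with $Q(R)$-coefficients, which contains the segment
\[ H_\varphi^1(\partial W; Q(R)) \to H_\varphi^2(W, \partial W; Q(R)) \to H_\varphi^2(W; Q(R)) \to H_\varphi^2(\partial W; Q(R)). \]
Hence it suffices to prove that both $H_\varphi^1(\partial W;Q(R))$ and $H_\varphi^2(\partial W;Q(R))$ vanish. Since $\partial W$ is the disjoint union of $-M$ and $N$, this reduces to showing $H_\varphi^i(M;Q(R)) = 0$ and $H_\varphi^i(N;Q(R)) = 0$ for $i = 1, 2$; as $M$ and $N$ play symmetric roles, I would treat only $M$.

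The first step is to compute the $Q(R)$-twisted \emph{homology} of $M$. As $Q(R)$ is the quotient field of the domain $R$, it is a localization of $R$ and hence flat over $R$, and $\overline{C_*(\widetilde M)} \otimes_\Lambda Q(R) \cong \bigl(\overline{C_*(\widetilde M)} \otimes_\Lambda R\bigr) \otimes_R Q(R)$, so $H_i^\varphi(M;Q(R)) \cong H_i^\varphi(M;R) \otimes_R Q(R)$ for every $i$. By Lemma~\ref{lem_homology1}, $H_1^\varphi(M;R)$ is a torsion $R$-module, and $H_2^\varphi(M;R) \cong R_0$, which is annihilated by $\varphi(t) - 1 \neq 0$ and is therefore torsion as well. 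Consequently $H_1^\varphi(M;Q(R)) = H_2^\varphi(M;Q(R)) = 0$.

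The second step is to pass from homology to cohomology over the field $Q(R)$. Running the universal coefficient spectral sequence $E_2^{p,q} = \overline{\ext_{Q(R)}^q(H_p^\varphi(M;Q(R)), Q(R))}$ converging to $H_\varphi^{p+q}(M;Q(R))$ --- exactly as in the proof of Lemma~\ref{lem_homology1}, and with the same care about the involuted module structure --- the higher $\ext$-terms vanish because $Q(R)$ is a field, so it collapses to $H_\varphi^i(M;Q(R)) \cong \overline{\hom_{Q(R)}(H_i^\varphi(M;Q(R)), Q(R))}$. Together with the first step this gives $H_\varphi^i(M;Q(R)) = 0$ for $i = 1, 2$, and the same for $N$. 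Feeding this into the long exact sequence of $(W,\partial W)$ completes the proof.

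I do not expect a genuine obstacle here: the argument is essentially formal once one knows the boundary homology is torsion, which is Lemma~\ref{lem_homology1}. The only points needing attention are the routine bookkeeping of the involution through the universal coefficient spectral sequence and the tacit assumption that the compact topological manifolds in play have the homotopy type of finite CW complexes, so that the relevant equivariant chain complexes may be taken finitely generated and free; for the homology computation in the first step, however, flatness of $Q(R)$ over $R$ already suffices without any finiteness hypothesis.
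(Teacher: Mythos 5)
Your proof is correct, but it takes a genuinely different route from the paper. The paper works entirely in homology: it invokes Lemma~\ref{lem_homology1} to get $H_1^\varphi(\partial W;Q(R)) = H_2^\varphi(\partial W;Q(R)) = 0$ (torsion $R$-modules die after tensoring with $Q(R)$), feeds this into the \emph{homology} long exact sequence of $(W,\partial W)$ to conclude that $H_2^\varphi(W;Q(R)) \to H_2^\varphi(W,\partial W;Q(R))$ is an isomorphism, and then applies Poincar\'e duality for the pair, $H_2^\varphi(W;Q(R)) \cong H_\varphi^2(W,\partial W;Q(R))$ and $H_2^\varphi(W,\partial W;Q(R)) \cong H_\varphi^2(W;Q(R))$, to translate this into the stated cohomology isomorphism. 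You instead stay on the cohomology side throughout: you use the \emph{cohomology} long exact sequence of the pair and then must compute $H_\varphi^i(\partial W;Q(R))$ directly, which you do by combining the flat base change to $Q(R)$ with the collapse of the universal coefficient spectral sequence over a field. Both arguments bottom out in the same input (Lemma~\ref{lem_homology1} plus finite freeness of the equivariant chain complexes), but the paper leverages Poincar\'e duality on $W$ to avoid any cohomology-of-the-boundary computation, whereas you leverage the trivial $\ext$-vanishing over the field $Q(R)$ to avoid invoking duality on the $4$-manifold. A small bonus of your route is that it nowhere uses the $\varphi$-tameness condition $H_1^\varphi(W;R) = 0$; only the hypotheses on $M$ and $N$ enter.
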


\begin{proof}
Since $H_1^\varphi(W; Q(R)) = H_2^\varphi(\partial W; Q(R)) = 0$ by Lemma~\ref{lem_homology1} (1), it follows from the homology long exact sequence for $(W, \partial W)$ that the homomorphism $H_2^\varphi(W; Q(R)) \to H_2^\varphi(W, \partial W; Q(R))$ is an isomorphism.
By Poincar\'e duality we have the desired isomorphism.
\end{proof}

We consider the following sequence of homomorphisms
\begin{align*}
\overline{\Psi}_{W, \varphi} \colon H_2^\varphi(W, \partial W; R) &\to H_2^\varphi(W, \partial W; Q(R)) & &\to H_\varphi^2(W; Q(R)) \\
&\xleftarrow{\cong} H_\varphi^2(W, \partial W; Q(R)) & &\to \overline{\hom_R(H_2^\varphi(W, \partial W; R), Q(R))}.
\end{align*}
Here the homomorphisms are as follows:
\begin{enumerate}
\item the first one is a localization map;
\item the second one comes from Poincar\'e duality;
\item the third one is the isomorphism as in Lemma~\ref{lem_h2};
\item the last one is the evaluation map induced by the Kronecker pairing $\kappa$.
\end{enumerate}
As the twisted intersection pairing of $W$ the map $\overline{\Psi}_{W, \varphi}$ induces a hermitian sesquilinear pairing
\[ H_2^\varphi(W, \partial W; R) \times H_2^\varphi(W, \partial W; R) \to Q(R),~ (v, w) \mapsto \overline{\Psi}_{W, \varphi}(v)(w), \]
which we call the \textit{twisted intersection pairing} of $(W, \partial W)$.

The following lemma is proved totally parallel to one of the claims in the proof of \cite[Lemma 2.8]{BF15}.
\begin{lem} \label{lem_commuting}
The twisted intersection pairings of $W$ and $(W, \partial W)$ and the twisted Blanchfield pairings of $M$ and $N$ associated with $\varphi$ fit into the following commutative diagram:
\[ \xymatrix{
H_2^\varphi(W; R) \times H_2^\varphi(W; R) \ar[rr] \ar[d] & & R \ar[d] \\
H_2^\varphi(W, \partial W; R) \times H_2^\varphi(W, \partial W; R) \ar[rr] \ar[d] & & Q(R) \ar[d] \\
H_1^\varphi(\partial W; R) \times H_1^\varphi(\partial W; R) \ar[rr]^-{\bl_{-M, \varphi} \oplus \bl_{N, \varphi}} & & Q(R) / R.
} \]
\end{lem}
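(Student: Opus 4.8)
The plan is to verify commutativity of the diagram square by square, tracing a pair of cycles through the relevant sequences of homomorphisms. The top square concerns the localization map $H_2^\varphi(W;R)\to H_2^\varphi(W,\partial W;R)$ together with the inclusion $R\hookrightarrow Q(R)$, and here one only has to observe that the intersection pairing of $W$ and the twisted intersection pairing of $(W,\partial W)$ are both defined from the same Poincar\'e duality and Kronecker evaluation data, the only difference being whether one localizes before or after; since localization is a ring homomorphism compatible with all these maps, the square commutes essentially by naturality of the universal coefficient and Poincar\'e duality isomorphisms. So the first step is to set up the diagram chase at the chain level, representing classes in $H_2^\varphi(W;R)$ by relative cycles and unwinding the definitions of $\Psi_{W,\varphi}$ and $\overline{\Psi}_{W,\varphi}$.

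The bottom square is the substantive one. First I would use the boundary homomorphism $H_2^\varphi(W,\partial W;R)\to H_1^\varphi(\partial W;R)$, which by the tameness hypothesis $H_1^\varphi(W;R)=0$ fits into the short exact sequence of Lemma~\ref{lem_homology2}, and I would show that the twisted intersection pairing of $(W,\partial W)$ with values in $Q(R)$ descends, via this boundary map, to the Blanchfield pairing on $H_1^\varphi(\partial W;R)$ composed with the projection $Q(R)\to Q(R)/R$. The mechanism is the standard one relating the intersection form of a manifold with boundary to the linking/Blanchfield form on the boundary: if $x,y\in H_2^\varphi(W,\partial W;R)$ have boundaries $\partial x,\partial y\in H_1^\varphi(\partial W;R)$, one chooses a chain in $W$ bounding a multiple of $\partial y$ (possible since $\Delta_M\Delta_N$ applied by $\varphi$ is nonzero, so the Blanchfield module is torsion and $\partial y$ is torsion in $H_1^\varphi(W,\partial W;R)$-type considerations), intersects with $x$, and divides; the resulting class in $Q(R)/R$ is by definition $\bl_{\partial W,\varphi}(\partial x,\partial y)$. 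Keeping careful track of signs and the decomposition $\partial W=-M\sqcup N$ produces the summand $\bl_{-M,\varphi}\oplus\bl_{N,\varphi}$. As the text notes, this is "totally parallel to" the corresponding claim in \cite[Lemma 2.8]{BF15}, so the honest content is to check that the twisted coefficient system $\varphi\colon\Lambda\to R$ does not interfere with that argument; it does not, because $\varphi$ is admissible and all the duality and universal-coefficient statements used have already been established in Lemmas~\ref{lem_homology1}, \ref{lem_dual}, \ref{lem_homology2}, and \ref{lem_h2} for this coefficient system.

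Concretely, the steps in order are: (i) fix chain-level representatives and recall the definitions of the two intersection pairings and of $\bl_{-M,\varphi}\oplus\bl_{N,\varphi}$; (ii) check the top square by naturality of localization; (iii) for the bottom square, use $H_1^\varphi(W;R)=0$ to get the surjection $H_2^\varphi(W,\partial W;R)\twoheadrightarrow H_1^\varphi(\partial W;R)$ and identify, through Poincar\'e--Lefschetz duality and the long exact sequence of the pair, the relative intersection form composed with $Q(R)\to Q(R)/R$ as a pairing that factors through the boundary; (iv) identify that factored pairing with the Blanchfield pairing of $\partial W$ by comparing the two defining sequences $\overline{\Psi}_{W,\varphi}$ and $\Phi_{\partial W,\varphi}$, which agree because both are built from the Bockstein for $0\to R\to Q(R)\to Q(R)/R\to 0$ and Poincar\'e duality on $\partial W$ versus Poincar\'e--Lefschetz duality on $(W,\partial W)$, related by the standard commuting square between the duality maps of a manifold and its boundary. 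The main obstacle is step (iv): one must align the Bockstein homomorphism appearing in the definition of $\bl_{\partial W,\varphi}$ with the connecting map coming from the pair $(W,\partial W)$ and the coefficient sequence, and make sure the involutions on the $R$-structures (which the paper is careful about, e.g. in the proof of Lemma~\ref{lem_homology1}(2)) are threaded consistently so that the pairing one lands on is genuinely hermitian and genuinely $\bl_{-M,\varphi}\oplus\bl_{N,\varphi}$ rather than its conjugate or negative. Once the sign and involution bookkeeping is settled, commutativity of both squares is formal.
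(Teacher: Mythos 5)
Your proposal is correct and follows essentially the same route as the paper: the paper's proof consists of a single line deferring to the argument in \cite[Lemma 2.8]{BF15}, and what you have written is precisely an unpacking of that argument adapted to the twisted coefficient system, with the correct identification of the two squares (localization/naturality for the top; matching the Bockstein of $0\to R\to Q(R)\to Q(R)/R\to 0$ with the connecting map of the pair, and tracking signs and involutions, for the bottom). No gap.
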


We are now in a position to prove Proposition~\ref{prop_comparing}.

\begin{proof}[Proof of Proposition~\ref{prop_comparing}]
First, $H_2^\varphi(W, \partial W; R)$ and $G^\varphi(W; R)$ are free $R$-modules by Lemma~\ref{lem_homology2} (1) and (2).
We pick an arbitrary basis $\mathcal{B}$ of $G^\varphi(W; R)$, and set $A$ to be the hermitian matrix representing the twisted intersection pairing associated with $\varphi$ on $G^\varphi(W; R)$ with respect to $\mathcal{B}$.
By Lemma~\ref{lem_homology2} (3) we see that $A$ satisfies (2).

We write $\mathcal{C}$ for the basis of $H_2^\varphi(W, \partial W; R)$ dual to $\mathcal{B}$ by the isomorphism $D_{W, \varphi} \colon H_2^\varphi(W, \partial W; R) \to \overline{\hom_R(G^\varphi(W; R), R)}$.
Then the homomorphism $G^\varphi(W; R) \to H_2^\varphi(W, \partial W; R)$ is given by the map $v \mapsto A v$ with respect to $\mathcal{B}$ and $\mathcal{C}$.
By Lemma~\ref{lem_commuting} we see that $A^{-1}$ represents the twisted intersection pairing of $(W, \partial W)$ associated with $\varphi$ with respect to $\mathcal{C}$, and we have the following commutative diagram of the pairings: 
\[ \xymatrix{
R^s \times R^s \ar[rr]^{(v, w) \mapsto \bar{v}^T A w} \ar[d]^{(v, w) \mapsto (A v, A w)} & & R \ar[d] \\
R^s \times R^s \ar[rr]^{(v, w) \mapsto \bar{v}^T A^{-1} w} \ar[d] & & Q(R) \ar[d] \\
H_1^\varphi(\partial W; R) \times H_1^\varphi(\partial W; R) \ar[rr]^-{\bl_{-M, \varphi} \oplus \bl_{N, \varphi}} & & Q(R) / R,
} \]
where $s = \rank_R G^\varphi(W; R) = \rank_{R_0} G(W; R_0)$.
Since the vertical maps form short exact sequences, we now see that $A$ satisfies (1).
\end{proof}

\section{Proof of the main theorem} \label{sec_main}

We now prove Theorem~\ref{thm_main1}.
In fact, we state and prove a slightly stronger theorem which takes positive and negative crossing changes into account.
A crossing change for a knot in $S^3$ is called positive (resp. negative) if it turns a negative (resp. positive) crossing in a knot diagram into positive (resp. negative) one.

Theorem~\ref{thm_main1} is a direct corollary of the following theorem.
Recall that for a matrix $A$ over $R$ we write $A_0$ for its image under the quotient map $R \to R_0$.  
\begin{thm} \label{thm_main2}
Let $J$ and $K$ be knots in $S^3$ such that $J$ can be turned into $K$ by $n_+$ positive crossing changes and $n_-$ negative ones, and let $\varphi \colon \Z[t^{\pm}] \to R$ be an admissible homomorphism.
If $\varphi(\Delta_J(t) \Delta_K(t)) \neq 0$ and if $\varphi(\Delta_J(t)) R$ and $\varphi(\Delta_K(t)) R$ are coprime, then there exists a hermitian matrix $A$ of size $n_+ + n_-$ over $R$ satisfying the following:
\begin{enumerate}
\item $\lambda(A)$ is isometric to $\bl_{-J \sharp K, \varphi}$.
\item $A_0$ is the diagonal matrix with $n_+$ entries of $1$ and $n_-$ ones of $-1$.
\end{enumerate}
\end{thm}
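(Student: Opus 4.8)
\textit{Proof idea (plan).}
The plan is to realize the transformation of $J$ into $K$ by surgery on a link of unknots, build an explicit $4$-dimensional cobordism from it, and feed that cobordism into Proposition~\ref{prop_comparing}. Since $J$ is turned into $K$ by $n_+$ positive and $n_-$ negative crossing changes, there are pairwise disjoint circles $c_1, \dots, c_n$ (where $n = n_+ + n_-$) in the interior of $X_J$, each bounding an embedded disk in $S^3$ meeting $J$ transversally in two points of opposite sign, such that $(\varepsilon_1, \dots, \varepsilon_n)$-surgery on $c_1 \sqcup \dots \sqcup c_n$ transforms $(S^3, J)$ into $(S^3, K)$, with $\varepsilon_i = +1$ at the positive crossing changes and $\varepsilon_i = -1$ at the negative ones; as they bound disjoint disks the $c_i$ are pairwise unlinked, and each is null-homologous in $X_J$ and hence in $M_J$. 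Let $W$ be obtained from $M_J \times [0,1]$ by attaching $n$ two-handles along $c_1 \times \{1\}, \dots, c_n \times \{1\}$ with framings $\varepsilon_1, \dots, \varepsilon_n$. Surgery on the $c_i$ inside $M_J$ takes place in $S^3 \setminus \nu(J)$, and, since $\lk(J, c_i) = 0$, it leaves the $0$-framing of $J$ intact; hence $\partial W = (-M_J) \sqcup M_K$. As the $c_i$ are null-homologous, $H_1(W; \Z) \to \Z$ is an isomorphism compatible with those of $M_J$ and $M_K$, so $W$ satisfies conditions (1) and (2) of the definition of a $\varphi$-tame cobordism.

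The point where the hypotheses enter is the third condition, $H_1^\varphi(W; R) = 0$. Collapsing the product and the handles shows $W \simeq M_J \cup (n \text{ two-cells along the } c_i)$, so, passing to infinite cyclic covers, $H_1(W; \Lambda)$ is a quotient of the Alexander module $H_1(X_J; \Lambda)$; turning the handle decomposition of $W$ upside down exhibits $W$ equally as $M_K \times [0,1]$ with $n$ two-handles attached, so $H_1(W; \Lambda)$ is also a quotient of $H_1(X_K; \Lambda)$. As the Alexander module of a knot admits a square presentation matrix of determinant the Alexander polynomial, it is annihilated by that polynomial; hence $H_1(W; \Lambda)$ is annihilated by both $\Delta_J(t)$ and $\Delta_K(t)$. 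Since $H_0(W; \Lambda) = \Lambda / (t-1)\Lambda$, the argument of Section~\ref{sec_pairing} gives $H_1^\varphi(W; R) \cong H_1(W; \Lambda) \otimes_\Lambda R$, which is therefore annihilated by $\varphi(\Delta_J(t))$ and by $\varphi(\Delta_K(t))$; as these generate the unit ideal of $R$ by hypothesis, $H_1^\varphi(W; R) = 0$. Thus $W$ is a $\varphi$-tame cobordism from $M_J$ to $M_K$.

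Now apply Proposition~\ref{prop_comparing} to $W$: the module $G^\varphi(W; R)$ is free, and any hermitian matrix $A$ representing the twisted intersection pairing on it has $\lambda(A)$ isometric to $\bl_{-M_J, \varphi} \oplus \bl_{M_K, \varphi}$ and $A_0$ representing the intersection pairing of $W$ over $R_0$ on $G(W; R_0)$. Since the inclusion $X_L \hookrightarrow M_L$ is an isometry of Blanchfield pairings, $-M_J = M_{-J}$, and the Blanchfield pairing is additive under connected sums, $\bl_{-M_J, \varphi} \oplus \bl_{M_K, \varphi} \cong \bl_{-J, \varphi} \oplus \bl_{K, \varphi} \cong \bl_{-J \sharp K, \varphi}$, which is (1). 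For the size: the relative chain complex $C_*(\widetilde{W}, \widetilde{M_J})$ is chain homotopy equivalent to $\Lambda^n$ concentrated in degree $2$, so, via Lemma~\ref{lem_homology2} and the homology exact sequence of the triple $(W; \partial W, M_J)$, the module $G^\varphi(W; R)$ has rank $n = n_+ + n_-$ and $A$ has size $n_+ + n_-$. Finally, over $R_0$ the manifold $W$ is $M_J \times [0,1]$ with the $\varepsilon_i$-framed two-handles; the $c_i$ bound pairwise disjoint Seifert surfaces $F_i$ in $M_J$, the closed surfaces $S_i = F_i \cup (\text{core of the } i\text{-th handle})$ satisfy $S_i \cdot S_j = \varepsilon_i \delta_{ij}$, and by Lemma~\ref{lem_homology2}(3) and the rank count their classes form a basis of $G(W; R_0)$; choosing the basis of $G^\varphi(W; R)$ appropriately so that its reduction is this one, $A_0$ becomes the diagonal matrix with entries $\varepsilon_1, \dots, \varepsilon_n$, i.e.\ with $n_+$ entries $1$ and $n_-$ entries $-1$, which is (2).

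The main obstacle, to my mind, is the combination of the vanishing $H_1^\varphi(W; R) = 0$ with the rank computation: one must produce a cobordism from $M_J$ to $M_K$ that is \emph{simultaneously} $\varphi$-tame --- which genuinely relies on the coprimality of $\varphi(\Delta_J(t))R$ and $\varphi(\Delta_K(t))R$, without which $H_1^\varphi(W; R)$ need not vanish and Proposition~\ref{prop_comparing} does not apply --- and assembled from exactly $n_+ + n_-$ two-handles, so that the resulting matrix attains the minimal possible size. The remaining delicate point is the orientation and crossing-sign bookkeeping needed to see that $A_0$ has precisely $n_+$ entries equal to $1$ (rather than $n_-$); this is carried out with the conventions of \cite{BF15}.
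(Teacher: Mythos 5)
Your proposal is correct and follows essentially the same route as the paper: build $W$ from $M_J\times[0,1]$ by attaching $n_++n_-$ two-handles along unknotted, nullhomologous circles realizing the crossing changes, verify that $W$ is a $\varphi$-tame cobordism (using coprimality to kill $H_1^\varphi(W;R)$), compute $G(W;R_0)$ and its intersection form from the handle data, and invoke Proposition~\ref{prop_comparing}. The only cosmetic differences are that you argue the surjectivity of $H_1^\varphi(M_J;R)\to H_1^\varphi(W;R)$ via the $2$-cell attachment picture where the paper uses Mayer--Vietoris (these are interchangeable), and that you claim the $c_i$ bound pairwise disjoint surfaces inside $M_J$ itself, whereas the paper sidesteps that claim by nesting the surfaces at distinct levels in $M_J\times[0,1]$; the latter is cleaner since disjointness in $M_J$ is not immediate and needs the extra argument you omit (e.g.\ tubing at distinct radii from $J$), but the overall structure and conclusion coincide.
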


Considering all knots $J$ with $\Delta_J(t) = 1$ and the identity map on $\Lambda$ as $\varphi$ in Theorem~\ref{thm_main2}, we recover \cite[Theorem 1.1]{BF15}. 

The following proposition, combined with Proposition~\ref{prop_comparing}, implies Theorem~\ref{thm_main2}.
Note that we have
\[ \bl_{-M_{J, \varphi}} \oplus \bl_{M_{K, \varphi}} = \bl_{-J, \varphi} \oplus \bl_{K, \varphi} = \bl_{-J \sharp K, \varphi}. \]

\begin{prop} \label{prop_construction}
Let $J$ and $K$ be knots in $S^3$ such that $J$ can be turned into $K$ by $n_+$ positive crossing changes and $n_-$ negative ones, and let $\varphi \colon \Lambda \to R$ be an admissible homomorphism.
If $\varphi(\Delta_J(t) \Delta_K(t)) \neq 0$ and if $\varphi(\Delta_J(t)) \cdot R + \varphi(\Delta_K(t)) \cdot R = R$, then there exists a smooth $\varphi$-tame cobordism $W$ from $M_J$ to $M_K$ satisfying the following:
\begin{enumerate}
\item $G(W; R_0)$ is a free $R_0$-module of rank $n_+ + n_-$.
\item The intersection paring on $G(W; R_0)$ is represented by the diagonal matrix with $n_+$ entries of $1$ and $n_-$ ones of $-1$.
\end{enumerate} 
\end{prop}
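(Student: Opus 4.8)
The plan is to realize $d(J,K)$-many crossing changes geometrically, build $W$ as a \emph{stack} of elementary cobordisms --- one per crossing change --- and then to read off the two numerical conditions from the individual pieces while deducing $\varphi$-tameness of the whole stack from the coprimality hypothesis. Concretely: since $J$ becomes $K$ after $n_+$ positive and $n_-$ negative crossing changes, there is a chain of knots $J = J_0, J_1, \dots, J_n = K$ with $n = n_+ + n_-$ in which each $J_i$ arises from $J_{i-1}$ by a single crossing change; each such change is $\varepsilon_i$-framed surgery ($\varepsilon_i \in \{\pm 1\}$) on an unknot $c_i \subset S^3 \setminus J_{i-1}$ bounding a disk meeting $J_{i-1}$ in two points of opposite sign, and we label so that $\varepsilon_i = +1$ for the positive changes and $-1$ for the negative ones. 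Let $W_i$ be obtained from $M_{J_{i-1}} \times [0,1]$ by attaching a single $2$-handle along $c_i \subset M_{J_{i-1}} \times \{1\}$ with framing $\varepsilon_i$; because surgeries on disjoint framed links in $S^3$ commute, the top boundary of $W_i$ is $M_{J_i}$. Finally put $W = W_1 \cup_{M_{J_1}} W_2 \cup_{M_{J_2}} \cdots \cup_{M_{J_{n-1}}} W_n$, a smooth cobordism from $M_J$ to $M_K$.

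For a single $W_i$ I would check the following. The disk bounded by $c_i$ meets $J_{i-1}$ algebraically trivially, so $c_i$ is null-homologous in $M_{J_{i-1}}$; attaching the handle leaves $H_1$ unchanged, so $H_1(W_i;\Z) \cong \Z$ and both boundary inclusions are isomorphisms on $H_1$ compatible with the equipped maps to $\Z$. The core of the handle, capped off with a Seifert surface of $c_i$ in $M_{J_{i-1}}$, is a class $e_i \in H_2(W_i; R_0)$ with $e_i \cdot e_i = \varepsilon_i$ (the Seifert framing of the unknot $c_i$ is the $0$-framing), and together with the generator of $H_2(M_{J_{i-1}}; R_0) \cong R_0$ it forms a basis of $H_2(W_i; R_0) \cong R_0^2$ whose intersection form is $\operatorname{diag}(0,\varepsilon_i)$. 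The generator of $H_2(M_{J_i}; R_0)$ maps to a unit multiple of the generator of $H_2(M_{J_{i-1}}; R_0)$ plus a multiple of $e_i$, and that multiple is $0$: the belt circle of the handle bounds the co-core disk in $W_i$, hence is null-homologous in $M_{J_i}$ (using that $H_1(M_{J_i};\Z) \hookrightarrow H_1(W_i;\Z)$). Therefore $G(W_i;R_0)$ is free of rank $1$, spanned by the image of $e_i$, with intersection form $(\varepsilon_i)$.

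Next I would glue. A Mayer--Vietoris argument over each interface $M_{J_k}$ --- using that $H_1(M_{J_k};R_0)$ injects into $H_1(W_k;R_0) \oplus H_1(W_{k+1};R_0)$, and that by the vanishing above the generator of $H_2(M_{J_k};R_0)$ is identified up to a unit with the generators of the neighbouring pieces' second homologies --- shows $G(W;R_0) \cong \bigoplus_{i=1}^n G(W_i;R_0)$, free of rank $n_+ + n_-$; and since for $i \neq j$ the classes from $W_i$ and $W_j$ can be pushed into disjoint collars, the intersection form on $G(W;R_0)$ is $\bigoplus_i (\varepsilon_i)$, the diagonal matrix with $n_+$ entries $1$ and $n_-$ entries $-1$. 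This gives conditions (1) and (2). It remains to see that $W$ is $\varphi$-tame; everything except $H_1^\varphi(W;R) = 0$ is formal, and this is exactly where the assumption $\varphi(\Delta_J(t)) \cdot R + \varphi(\Delta_K(t)) \cdot R = R$ enters. Viewing $W$ rel $M_J \times \{0\}$ as built from $2$-handles only gives a surjection $H_1(X_J;\Lambda) = H_1(M_J;\Lambda) \twoheadrightarrow H_1(W;\Lambda)$; turning the handle decomposition upside down (the dual of a $2$-handle is a $2$-handle) gives likewise $H_1(X_K;\Lambda) \twoheadrightarrow H_1(W;\Lambda)$. Hence $H_1(W;\Lambda)$ is annihilated by both $\Delta_J(t)$ and $\Delta_K(t)$. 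Since $H_1^\varphi(W;R) \cong H_1(W;\Lambda) \otimes_\Lambda R$ (the relevant $\tor_1^\Lambda$ vanishes as in Section~\ref{sec_pairing}), it is annihilated by $\varphi(\Delta_J(t))$ and $\varphi(\Delta_K(t))$, which generate $R$, so it vanishes.

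The main obstacle, to my mind, is precisely this last step: the individual $W_i$ are not $\varphi$-tame in general, and there is no reason for $H_1^\varphi(W;R)$ to vanish without coprimality --- it is the simultaneous annihilation of $H_1(W;\Lambda)$ by $\Delta_J(t)$ and $\Delta_K(t)$, read against their coprimality, that forces it. Two further points need care although they are routine: matching the sign of each crossing change with the surgery framing $\varepsilon_i$, hence with the sign of $e_i \cdot e_i$, and tracking the compatibility of the isomorphisms $H_1(-;\Z) \to \Z$ throughout the construction. (Alternatively one could build $W$ in one step from an immersed annulus from $J$ to $K$ in $S^3 \times [0,1]$ with $n_+ + n_-$ double points by blowing them up, but the stacked description keeps the intersection form transparently diagonal.)
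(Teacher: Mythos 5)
Your proposal is correct and rests on the same two pillars as the paper's proof, but the construction of $W$ is organized differently. The paper realizes all $n_++n_-$ crossing changes \emph{simultaneously}: it picks disjoint curves $c_1,\dots,c_s$ in $X_J$ that form an unlink in $S^3$, each bounding a crossing-change disk, and attaches all $s$ two-handles to $M_J\times[0,1]$ in one step. It then constructs $s$ disjoint closed surfaces $F_1,\dots,F_s$ by capping off each handle core with a surface pushed into a separate interior level of the collar; these give an explicit basis of $G(W;R_0)$ and make the diagonal intersection form immediate with no gluing argument at all. You instead stack elementary cobordisms $W_1\cup_{M_{J_1}}\cdots\cup_{M_{J_{n-1}}}W_n$, compute $G(W_i;R_0)\cong R_0\langle e_i\rangle$ piece by piece, and then assemble $G(W;R_0)$ via Mayer--Vietoris across each interface. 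That assembly step is where your write-up is thinnest: you need to check that the generator of $H_2(M_{J_k};R_0)$ maps to the ``old'' generator of $H_2$ of each adjacent piece (you assert this via the belt-circle argument, which is right) and then carefully chase the quotient by $\operatorname{im}\,H_2(\partial W;R_0)$ through the iterated Mayer--Vietoris; it works, but the paper's one-shot construction sidesteps it entirely. The $\varphi$-tameness argument is identical in substance: both proofs observe that $W$ is obtained from \emph{either} boundary component by attaching only $2$-handles (you phrase this via dual handle decompositions, the paper via Mayer--Vietoris from each end), conclude that $H_1^\varphi(M_J;R)\to H_1^\varphi(W;R)$ and $H_1^\varphi(M_K;R)\to H_1^\varphi(W;R)$ are surjective, so $H_1^\varphi(W;R)$ is annihilated by both $\varphi(\Delta_J)$ and $\varphi(\Delta_K)$, and then invoke coprimality. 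Your framing through $H_1(W;\Lambda)\otimes_\Lambda R$ and the $\tor_1^\Lambda$-vanishing is a fine alternative to the paper's direct use of the twisted Mayer--Vietoris sequence. In short: same mathematics, different packaging; the paper's construction buys a cleaner computation of $G(W;R_0)$, while your stacked version makes the role of each individual crossing change more transparent.
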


\begin{proof}
We first construct a smooth cobordism $W$ from $M_J$ to $M_K$.
We write $s = n_+ + n_-$.
We recall that a positive (resp. negative) crossing change of a knot is realized by performing $1$-surgery (resp. $(-1)$-surgery) along the boundary of an embedded disk in $S^3$ intersecting the knot in precisely two points with opposite orientations.
Such embedded disks in $S^3$ corresponding to crossing changes can be picked to be disjoint.
Thus there exist simple closed curves $c_1, \dots, c_{n_+}$ with $1$-framings and $c_{n_+ + 1}, \dots, c_s$ with $(-1)$-framings in $X_J$ satisfying the following:
\begin{enumerate}
\item $c_1, \dots, c_s$ form an unlink in $S^3$.
\item The linking number $\lk(c_i, J)$ of $c_i$ and $J$ is $0$ for each $i$.
\item The manifold obtained from $X_J$ by Dehn surgery along $c_1, \dots, c_s$ is diffeomorphic to $X_K$.   
\end{enumerate}
Viewing $c_1, \dots, c_s$ also as lying in $M_J$, we define $W$ to be the result of adding $s$ $2$-handles to $M_J \times [0, 1]$ along $c_1 \times \{ 1 \}, \dots, c_s \times \{ 1 \}$.
It follows from the construction of $W$ that $\partial W$ is a disjoint union of $-M_J$ and $M_K$.
Note that we can think of the cobordism $-W$ from $-M_K$ to $M_J$ also as the result of adding $s$ $2$-handles to $M_K \times [0, 1]$ along such simple closed curves in $M_K \times \{ 1 \}$.

We next show that $W$ is a $\varphi$-tame cobordism from $M_J$ to $M_K$.
Since the curves $c_1, \dots, c_s$ are nullhomologous in $M_J$, the inclusion induced homomorphism $H_1(M_J; \Z) \to H_1(W; \Z)$ is an isomorphism, and, similarly, so is the other one $H_1(M_K; \Z) \to H_1(W; \Z)$.
Since the images in $H_1(W; \Z)$ of the meridional elements of $H_1(M_J; \Z)$ and $H_1(M_K; \Z)$ coincide, $W$ is equipped with an isomorphism $H_1(W; \Z) \to \Z$ such that the following diagram commutes:
\[ \xymatrix{
H_1(M_J; \Z) \ar[r] \ar[rd] & H_1(W; \Z) \ar[d] & H_1(M_K; \Z) \ar[l] \ar [ld] \\
& \Z &
} \]
It follows from the Mayer-Vietoris homology exact sequence that the inclusion induced homomorphism $H_1^\varphi(M_J; R) \to H_1^\varphi(W; R)$ is surjective, and, similarly, so is the other one $H_1^\varphi(M_K; R) \to H_1^\varphi(W; R)$.
Since $\varphi(\Delta_J(t))$ and $\varphi(\Delta_K(t))$ annihilate $H_1^\varphi(M_J; R)$ and $H_1^\varphi(M_K; R)$ respectively, they also do $H_1^\varphi(W; R)$.
The assumption that $\varphi(\Delta_J(t)) R$ and $\varphi(\Delta_K(t)) R$ are coprime implies that there exist $a$, $b \in R$ such that $1 = a \varphi(\Delta_J(t)) + b \varphi(\Delta_K(t))$, which thus annihilates $H_1^\varphi(W; R)$.
Hence we have $H_1^\varphi(W; R) = 0$.

Finally, we show that $W$ satisfies (1) and (2) as in the statement.
Recall that $c_1, \dots, c_s$ form an unlink in $S^3$, and $\lk(c_i, J) = 0$ for each $i$.
In particular, $c_1, \dots, c_s$ are nullhomologous in $M_J$.
Now we can find disjoint subsurfaces $F_1', \dots, F_s'$ in $M_J \times [0, 1]$ with $\partial F_i' = c_i \times \{ 1 \}$ for each $i$.
For example, such a subsurface $F_i'$ can be constructed in $M_J \times \left( 1 - \frac{i}{s}, 1 \right]$ for each $i$ as the union of $c_i \times \left[ 1-\frac{i-1}{s}, 1 \right]$,a punctured disk in $M_J \times \left\{ 1 - \frac{i-1}{s} \right\}$ whose boundary is $c_i \times \left\{ 1 - \frac{i-1}{s} \right\}$, and tubes in $M_J \times \left( 1 - \frac{i}{s}, 1 - \frac{i-1}{s} \right]$ along arcs in $J$.
We write $F_i$ for the union of $F_i'$ and the core of the $2$-handle attached to $c_i$ for each $i$.
It follows from the Mayer-Vietoris homology exact sequence that (1) is satisfied, and the images of the integral homology classes of the closed subsurfaces $F_1, \dots, F_s$ under the composition of maps $H_2(W; \Z) \to H_2(W; R_0) \to G(W; R_0)$ form a basis of $G(W; R_0)$.
It is straightforward to see that
\[ F_i \cdot F_j = 0 ~\text{for $i \neq j$},~
F_i \cdot F_i =
\begin{cases}
1 & \text{for $i = 1, \dots, n_+$}, \\
-1 & \text{for $i = n_+ + 1, \dots, s$},
\end{cases}
\]
and (2) follows from the following commutative diagram of the intersection pairings of $W$:
\[ \xymatrix{
H_2(W; \Z) \times H_2(W; \Z) \ar[r] \ar[d] & \Z \ar[d] \\
H_2(W; R_0) \times H_2(W; R_0) \ar[r] & R_0.
} \]
\end{proof}

\section{Applications and examples} \label{sec_app}

Applying Theorem~\ref{thm_main2} and Corollary~\ref{cor_linking_pairing}, we formulate obstructions for $d(J, K) \leq 2$ for knots $J$ and $K$ in $S^3$, and describe our computational results obtained for all pairs of nontrivial knots (possibly non-prime) with up to $10$ crossings.
The full details of the results are available on the website \cite{FKS}.

\subsection{Obstructions for $d(J, K) \leq 2$} \label{subsec_obstructions}

First as an application of Theorem~\ref{thm_main2} we present obstructions for $d(J, K) = 1$ for knots $J$ and $K$ in $S^3$.

The following theorem, which generalizes the unknotting number one obstruction by Murakami~\cite{Muk90}, Fogel~\cite{F93} and John Rickard, is easily deduced from Theorem~\ref{thm_main2}.

\begin{prop} \label{prop_obstruction1}
Let $J$ and $K$ be knots in $S^3$ such that $J$ can be turned into $K$ by a single crossing change, where we set $\epsilon = 1$ if the crossing change is positive and $\epsilon = -1$ otherwise.
We pick representatives $\Delta_J(t)$, $\Delta_K(t) \in \Lambda$ of the Alexander polynomials so that $\Delta_J(1) = \Delta_K(1) = 1$.
Let $\varphi \colon \Lambda \to R$ be an admissible homomorphism.
If $\varphi(\Delta_J(t) \Delta_K(t)) \neq 0$ and if $\varphi(\Delta_J(t)) \cdot R + \varphi(\Delta_K(t)) \cdot R = R$, then there exists a generator $g$ of $H_1^\varphi(X_{-J \sharp K}; R)$ such that
\[ \bl_{-J \sharp K, \varphi}(g, g) = \frac{\epsilon}{\varphi(\Delta_J(t) \Delta_K(t))} \in Q(R) / R. \]
\end{prop}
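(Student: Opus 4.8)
The plan is to apply Theorem~\ref{thm_main2} with $n_+ + n_- = 1$: if the crossing change turning $J$ into $K$ is positive take $(n_+, n_-) = (1,0)$, and if it is negative take $(n_+, n_-) = (0,1)$, so that $\epsilon$ is the single entry prescribed by part~(2). The theorem then produces a hermitian $1 \times 1$ matrix $A = (a)$ over $R$ with $a \neq 0$ such that $\lambda(A)$ is isometric to $\bl_{-J \sharp K, \varphi}$ and $A_0 = (\epsilon)$, that is, $a \equiv \epsilon \pmod{(\varphi(t) - 1) R}$.

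Because $\lambda(A)$ is supported on the cyclic module $R / a R$, the isometry identifies $H_1^\varphi(X_{-J \sharp K}; R)$ with $R / a R$; let $g$ correspond to $1 + a R$. Then $g$ generates $H_1^\varphi(X_{-J \sharp K}; R)$ and, straight from the definition of $\lambda$,
\[ \bl_{-J \sharp K, \varphi}(g, g) = \lambda(A)(1,1) = a^{-1} \in Q(R) / R. \]
Thus the proposition comes down to the identity $a^{-1} = \epsilon / \varphi(\Delta_J(t) \Delta_K(t))$ in $Q(R)/R$, where $\Delta_J, \Delta_K$ are the symmetric representatives normalized by $\Delta_J(1) = \Delta_K(1) = 1$ (so that $\varphi(\Delta_J(t) \Delta_K(t))$ is hermitian); if this is only reached after replacing $g$ by $c g$ for a unit $c$ of $R / a R$, the extra factor is $\lambda(A)(c,c)/\lambda(A)(1,1) = c \bar c$, which one absorbs.

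To identify $a$, note that $R / a R \cong H_1^\varphi(X_{-J \sharp K}; R)$ has order $\varphi(\Delta_{-J \sharp K}(t))$, and that $\Delta_{-J \sharp K}(t) \doteq \Delta_J(t^{-1}) \Delta_K(t)$ generates, already in $\Lambda$ and hence after $\varphi$, the same involution-invariant ideal as $\Delta_J(t) \Delta_K(t)$; so $a = w\, \varphi(\Delta_J(t) \Delta_K(t))$ with $w \in R^\times$, and $w$ is hermitian since $a$ and $\varphi(\Delta_J(t) \Delta_K(t))$ are. Reducing modulo $(\varphi(t) - 1) R$ and using $\Delta_J(1) = \Delta_K(1) = 1$ forces $w \equiv \epsilon$. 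When the only hermitian unit of $R$ congruent to $\epsilon$ modulo $(\varphi(t) - 1) R$ is $\epsilon$ itself --- the case of $R = \Lambda$, of localizations of $\Lambda$, and of $R = \R[t^{\pm 1}]$ --- this gives $a = \epsilon\, \varphi(\Delta_J(t) \Delta_K(t))$ outright and the proof is complete with $g$ as above. For an arbitrary admissible $\varphi$ one either reduces to $\varphi = \mathrm{id}$ by naturality of the Blanchfield pairing under $\varphi$ (when $\Delta_J$ and $\Delta_K$ are already coprime in $\Lambda$) or removes the residual sign by a signature computation on the cobordism $W$ built in the proof of Theorem~\ref{thm_main2}, comparing its twisted intersection form with an ordinary intersection form of an associated cyclic cover. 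This last step --- pinning $a$ down exactly, not merely up to a hermitian unit --- is the only nonformal point; for $\varphi_{-1}$, where $R_0 = \Z/2$ is blind to the sign, it is precisely the classical linking-form computation for knots of unknotting number one that must be invoked, and I expect it to be the main obstacle.
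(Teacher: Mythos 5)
Your approach is exactly what the paper intends: apply Theorem~\ref{thm_main2} with $n_+ + n_- = 1$ to obtain a hermitian $1\times 1$ matrix $A = (a)$ with $\lambda(A)$ isometric to $\bl_{-J\sharp K,\varphi}$ and $a\equiv\epsilon\pmod{(\varphi(t)-1)R}$, let $g$ be the image of $1\in R/aR$ under an isometry, and conclude $\bl(g,g) = a^{-1}$. The paper offers no proof of Proposition~\ref{prop_obstruction1} beyond the single phrase ``easily deduced from Theorem~\ref{thm_main2},'' so what you wrote is, as far as one can tell, the paper's own argument.

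The careful reader should, however, take seriously the point you flag near the end. Theorem~\ref{thm_main2} constrains $a$ only by $a\bar a^{-1} = 1$, $aR = \varphi(\Delta_J\Delta_K)R$, and $a\equiv\epsilon$ in $R_0 = R/(\varphi(t)-1)R$. With the symmetric representatives normalized by $\Delta_J(1)=\Delta_K(1)=1$, this writes $a = u\,\varphi(\Delta_J\Delta_K)$ with $u$ a hermitian unit congruent to $\epsilon$ in $R_0$, and the proposition's conclusion amounts to the assertion that $u\epsilon$ is a norm $c\bar c$ of a unit $c$ of $R/\varphi(\Delta_J\Delta_K)R$. When $R=\Lambda$ or a localization of $\Lambda$ the only hermitian unit is $\pm1$ and the congruence in $R_0$ already forces $u=\epsilon$, exactly as you say. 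But for $\varphi_{-1}\colon\Lambda\to\Z$ the congruence in $R_0=\Z/2$ is sign-blind, so $u$ could a priori be $-\epsilon$, and $-1$ is not in general a square modulo an odd integer; the same sign ambiguity appears in Corollary~\ref{cor_obstruction1}, where $\Delta_J(-1)$ equals $\pm\det(J)$ with a knot-dependent sign, which cannot always be normalized away while keeping $\Delta_J(1)=1$ and symmetry. So the ``nonformal point'' you isolate is a genuine gap that the paper's ``easily deduced'' glosses over; your sketched fixes (reduction to $\varphi=\mathrm{id}$ when coprimality already holds over $\Lambda$, or a signature/$\rho$-invariant computation on the cobordism of Proposition~\ref{prop_construction}) are the right sort of idea, but you have not carried either one out, and neither does the paper. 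In short: your route matches the paper's, you have been more honest than the paper about where it is incomplete, and the sign at $\varphi_{-1}$ remains to be pinned down.
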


By considering the admissible homomorphism $\varphi_{-1} \colon \Lambda \to \Z$ sending $t$ to $-1$, we have the following corollary, which generalizes the unknotting number one obstruction by Lickorish~\cite{Lic85}.

\begin{cor} \label{cor_obstruction1}
Let $J$ and $K$ be knots in $S^3$ such that $J$ can be turned into $K$ by a single crossing change, where we set $\epsilon = 1$ if the crossing change is positive and $\epsilon = -1$ otherwise.
If $\det(J)$ and $\det(K)$ are coprime, then there exists a generator $g$ of $H_1(\Sigma(-J \sharp K); \Z)$ such that
\[ \lk_{-J \sharp K, \varphi}(g, g) = \frac{2 \epsilon}{\det(J) \det(K)} \in \Z /  \det(J) \det(K) \Z. \]
\end{cor}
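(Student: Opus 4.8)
The plan is to specialize Proposition~\ref{prop_obstruction1} to the admissible homomorphism $\varphi_{-1} \colon \Lambda \to \Z$ sending $t$ to $-1$, and then to translate the resulting statement about the twisted Blanchfield pairing into one about the linking pairing of the double branched cover.

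First I would check the hypotheses of Proposition~\ref{prop_obstruction1} for $\varphi = \varphi_{-1}$. Knot determinants are nonzero, so $\varphi_{-1}(\Delta_J(t)\Delta_K(t)) = \Delta_J(-1)\Delta_K(-1)$ equals, up to sign, $\det(J)\det(K) \neq 0$; and the coprimality of $\det(J)$ and $\det(K)$ gives $\varphi_{-1}(\Delta_J(t)) \cdot \Z + \varphi_{-1}(\Delta_K(t)) \cdot \Z = \det(J)\Z + \det(K)\Z = \Z$. Among the representatives normalized by $\Delta_J(1) = \Delta_K(1) = 1$ there remains the freedom of multiplying by a power of $t$, and I would use it to arrange $\Delta_J(-1) = \det(J)$ and $\Delta_K(-1) = \det(K)$, so that $\varphi_{-1}(\Delta_J(t)\Delta_K(t)) = \det(J)\det(K)$. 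Proposition~\ref{prop_obstruction1} then produces a generator $h$ of $H_1^{\varphi_{-1}}(X_{-J \sharp K}; \Z)$ with $\bl_{-J \sharp K, \varphi_{-1}}(h, h) = \epsilon / (\det(J)\det(K)) \in \Q/\Z$.

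Next I would invoke the two identifications recalled in the introduction just before Corollary~\ref{cor_linking_pairing}: the isomorphism $H_1^{\varphi_{-1}}(X_{-J \sharp K}; \Z) \cong H_1(\Sigma(-J \sharp K); \Z)$, and the fact (from \cite[Lemma 3.3]{BF15}) that under it $\bl_{-J \sharp K, \varphi_{-1}}$ corresponds to $2 \lk_{-J \sharp K}$. Writing $g_0$ for the image of $h$, this gives $2 \lk_{-J \sharp K}(g_0, g_0) = \epsilon / (\det(J)\det(K))$. Finally, set $n = \det(J)\det(K)$, which is odd, so $2$ is invertible modulo $n$ and $g := 2 g_0$ is again a generator; then $\lk_{-J \sharp K}(g, g) = 4 \lk_{-J \sharp K}(g_0, g_0) = 2 \cdot \bigl(2 \lk_{-J \sharp K}(g_0, g_0)\bigr) = 2\epsilon / n$, which under the standard identification $\tfrac{1}{n}\Z/\Z \cong \Z/n\Z$ is precisely the asserted value $\tfrac{2\epsilon}{\det(J)\det(K)} \in \Z/\det(J)\det(K)\Z$.

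The argument is essentially immediate once Proposition~\ref{prop_obstruction1} is available; the only points needing care are purely arithmetic, namely keeping track of the factor $2$ introduced by \cite[Lemma 3.3]{BF15} and of the sign of $\Delta_J(-1)$, and both are handled using that knot determinants are odd (so $2$ is a unit modulo $n$) and that an Alexander polynomial representative may be adjusted by a unit of $\Lambda$. One should also note that the conclusion of Proposition~\ref{prop_obstruction1} already presupposes $H_1^{\varphi_{-1}}(X_{-J \sharp K}; \Z)$ to be cyclic (it is, by Theorem~\ref{thm_main2} with a size one matrix), so that ``generator'' is meaningful throughout.
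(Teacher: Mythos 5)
Your proof is correct and takes essentially the same route as the paper: apply Proposition~\ref{prop_obstruction1} with $\varphi_{-1}$, invoke \cite[Lemma 3.3]{BF15} to identify $\bl_{-J\sharp K,\varphi_{-1}}$ with $2\lk_{-J\sharp K}$, and pass from $h$ to $g=2h$ using that $\det(J)\det(K)$ is odd. The only (minor) difference is that you explicitly normalize $\Delta_J(-1)=\det(J)$ and $\Delta_K(-1)=\det(K)$ by multiplying the Conway-normalized representatives by a power of $t$, whereas the paper just writes $\varphi_{-1}(\Delta_J(t))=\pm\det(J)$ and leaves the sign tacit; your extra care is a harmless improvement.
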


\begin{proof}
It follows from \cite[Lemma 3.3]{BF15} that $\bl_{J, \varphi_{-1}}$ is isometric to $2 \lk_J$.
Note also that $\varphi_{-1}(\Delta_J(t)) = \Delta_J(-1) = \pm \det(J)$.
It thus follows from Proposition~\ref{prop_obstruction1} that there exists a generator $h$ of $H_1(\Sigma(-J \sharp K); \Z)$ such that
\[ 2 \lk_{-J \sharp K, \varphi}(h, h) = \frac{\epsilon}{\det(J) \det(K)} \in \Z /  \det(J) \det(K) \Z. \]
We set $g = 2 h$.
Since $\det(J) \det(K)$ is an odd number, $g$ is also a generator of $H_1(\Sigma(-J \sharp K); \Z)$, and it has the required properties. 
\end{proof}

Next as an application of Corollary~\ref{cor_linking_pairing} we formulate an obstruction for $d(J, K) = 2$ for knots $J$ and $K$ in $S^3$.

Before stating the obstruction we recall a classification of symmetric integral $2 \times 2$-matrices up to congruence.
The full classification has been already known to Gau\ss.
The following is a slightly weaker result.
We refer to Conway and Sloane~\cite[Section 15.3]{CS99} for an excellent exposition and also to \cite[Lemma 5.2]{BF15} for a proof.

\begin{lem} \label{lem_2by2-matrices}
Let $C$ be a nonsingular symmetric integral $2 \times 2$-matrix.
Then, either $C$ is congruent to a matrix of the form
\[
\begin{pmatrix}
a & c \\
c & b
\end{pmatrix}
\]
such that
\begin{enumerate}
\item $0 < |a| \leq |b| \leq |\det(C)|$,
\item $c \in \{ 0, \dots, \lfloor \frac{|a|}{2} \rfloor \}$,
\end{enumerate}
or $C$ is congruent to a matrix of the form
\[
\begin{pmatrix}
a & c \\
c & 0
\end{pmatrix}
\]
with $c^2 = \det(C)$, $c \geq 0$ and $a \in \{ -c, \dots, c \}$.
\end{lem}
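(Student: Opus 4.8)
The plan is to reduce the classification of an arbitrary nonsingular symmetric integral $2 \times 2$ matrix $C$ to a normal form by a sequence of elementary congruences, mimicking the reduction theory of binary quadratic forms. Write $C = \begin{pmatrix} a & c \\ c & b \end{pmatrix}$. The congruence action is $C \mapsto U^T C U$ for $U \in \gl_2(\Z)$, and I will use two elementary moves: (i) the swap $(a,b,c) \mapsto (b,a,c)$ coming from $U = \begin{pmatrix} 0 & 1 \\ 1 & 0 \end{pmatrix}$, and (ii) the shear $C \mapsto U^T C U$ with $U = \begin{pmatrix} 1 & n \\ 0 & 1 \end{pmatrix}$, which sends $(a,b,c) \mapsto (a,\, b + 2nc + n^2 a,\, c + na)$. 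There is also the sign flip $U = \begin{pmatrix} -1 & 0 \\ 0 & 1 \end{pmatrix}$ sending $c \mapsto -c$ while fixing $a,b$.

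First I would handle the generic case $a \neq 0$. Among all matrices congruent to $C$, choose one whose $(1,1)$-entry $a$ has minimal nonzero absolute value; after a swap if needed we may also arrange $|a| \le |b|$, and after a sign flip we may take $c \ge 0$. Now apply the shear with $U = \begin{pmatrix} 1 & 0 \\ n & 1 \end{pmatrix}$ acting on the other side — equivalently work with $c \bmod a$: since replacing the second basis vector by itself plus $n$ times the first changes $c$ to $c + na$ (and $b$ accordingly), I can reduce $c$ into the range $0 \le c \le \lfloor |a|/2 \rfloor$, using the sign flip once more if the reduced residue lies in $(\lfloor |a|/2\rfloor, |a|)$. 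The minimality of $|a|$ then forces $|b| \le |\det(C)|$: indeed $\det(C) = ab - c^2$ is a congruence invariant, and one shows $|a| \le |b|$ together with $0 \le c \le |a|/2$ gives $|ab| = |\det(C) + c^2| \le |\det(C)| + a^2/4 \le |\det(C)| + |ab|/4$, hence $|ab| \le \tfrac{4}{3}|\det(C)|$, and combined with $|a|\le|b|$ this bounds $|b|$; a slightly more careful version of this inequality chasing gives exactly $|b| \le |\det(C)|$, which is where I expect the bulk of the routine computation to sit.

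Next the degenerate case: if no matrix congruent to $C$ has nonzero $(1,1)$-entry, then in particular $a = 0$ and by the swap also $b$ can be taken $0$, so $C = \begin{pmatrix} 0 & c \\ c & 0 \end{pmatrix}$ with $c^2 = -\det(C) \neq 0$; wait — more precisely one arrives at $\det(C) = -c^2$, and one must also rule this out or keep it. Actually the clean dichotomy is: either some congruent matrix has nonzero upper-left entry, in which case we are in the first case above; or every diagonal entry of every congruent matrix vanishes, forcing $C = \begin{pmatrix} 0 & c \\ c & 0 \end{pmatrix}$ and hence $\det(C) = -c^2$, contradicting nonsingularity only if... no: $-c^2$ can be a legitimate determinant. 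So the second normal form $\begin{pmatrix} a & c \\ c & 0 \end{pmatrix}$ with $c^2 = \det(C)$ in the lemma must come from a different branch — namely when $a \ne 0$ is achievable but the reduced $b$ turns out to be $0$. I would therefore organise the proof as: reduce to $0 \le c \le \lfloor|a|/2\rfloor$ with $|a|$ minimal and $|a| \le |b|$; if $b \ne 0$ we land in case one; if $b = 0$ then $\det(C) = -c^2$, and after negating the form (which is not a congruence but we may instead apply the swap and a shear to rewrite $\begin{pmatrix}0&c\\c&0\end{pmatrix}$-type blocks) we reach $\begin{pmatrix} a & c \\ c & 0\end{pmatrix}$ with $c^2 = \det(C)$ and then shear $a$ into $\{-c,\dots,c\}$ via $a \mapsto a + 2nc$.

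The main obstacle will be bookkeeping the sign conventions and confirming that the shear $a \mapsto a + 2nc$ in the second case, together with the sign flip, really lands $a$ in the stated interval $\{-c,\dots,c\}$ of length $2c+1$ rather than a shifted one, and dually that in the first case the final inequality $|b| \le |\det(C)|$ holds on the nose rather than merely up to the constant $\tfrac{4}{3}$. Both are elementary but error-prone; since the paper explicitly cites \cite[Section 15.3]{CS99} and \cite[Lemma 5.2]{BF15} for the full argument, I would at this point simply invoke those references for the remaining verification rather than grinding through every case, noting that the present statement is a deliberately weakened form of Gauss's classification and its proof is standard reduction theory for binary integral quadratic forms.
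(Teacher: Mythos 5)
The paper provides no proof of this lemma beyond citing Conway--Sloane \cite[Section 15.3]{CS99} and \cite[Lemma 5.2]{BF15}, so your closing move --- falling back on those references for the remaining bookkeeping --- is, in effect, the same route the paper takes. Your reduction-theory sketch is the standard one and is sound in outline. Two remarks on the gaps you flagged.

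First, the inequality. Your estimate only yields $|ab|\le\tfrac43|\det C|$, but the sharp bound does follow from $0<|a|\le|b|$ and $0\le c\le\lfloor|a|/2\rfloor$ by splitting on the sign of $ab$. If $ab<0$ then $|\det C|=|ab|+c^2\ge|a|\,|b|\ge|b|$ at once. If $ab>0$ then $c^2\le|a|^2/4<|a|\,|b|$ forces $\det C>0$ with $|\det C|=|a|\,|b|-c^2$; when $|a|=1$ the constraint $c\in\{0,\dots,\lfloor|a|/2\rfloor\}$ forces $c=0$ and $|\det C|=|b|$, and when $|a|\ge2$ one has $(|a|-1)|b|\ge(|a|-1)|a|\ge|a|^2/4$, hence $|a|\,|b|-|a|^2/4\ge|b|$, so $|\det C|\ge|b|$. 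No minimality of $|a|$ beyond ``stable under swap and shear'' is needed.

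Second, the sign of $\det(C)$ in the degenerate branch. You are right to be suspicious: $\det\begin{pmatrix}a&c\\c&0\end{pmatrix}=-c^2$, and congruence preserves the determinant exactly, so any $C$ landing in that branch has $\det(C)=-c^2<0$. The condition as printed, $c^2=\det(C)$, is therefore a sign slip (it should read $c^2=-\det(C)$, equivalently $c^2=|\det(C)|$, which is what \cite[Lemma 5.2]{BF15} gives). Your attempt to ``negate the form'' to fix this is not a congruence and should be dropped; the correct reading is simply that once $b=0$ is reached, you shear $a\mapsto a+2nc$ (via $U=\begin{pmatrix}1&0\\n&1\end{pmatrix}$, which fixes $c$ and the zero entry) into $\{-c,\dots,c\}$ and you are done, with $c^2=-\det(C)$.
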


For a nonzero integer $d$ we denote by $\mathcal{C}_d$ the set of matrices which are of such forms as in Lemma~\ref{lem_2by2-matrices} and which are congruent to the identity matrix modulo $2$.
Now Corollary~\ref{cor_linking_pairing} and Lemma~\ref{lem_2by2-matrices} immediately imply the following proposition.
See \cite[Proposition 5.3]{BF15} for a corresponding proposition in the case of $\Delta_J(t) = 1$.

\begin{prop} \label{prop_obstruction2}
Let $J$ and $K$ be knots in $S^3$ with $d(J, K) \leq 2$.
If $\det(J)$ and $\det(K)$ are coprime, then there exists $C \in \mathcal{C}_d \cup \mathcal{C}_{-d}$, where $d = \det(J) \det(K)$, such that $\lambda(C)$ is isometric to $2 \lk_{-J \sharp K}$.
\end{prop}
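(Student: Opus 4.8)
The plan is to read this off directly from Corollary~\ref{cor_linking_pairing} and Lemma~\ref{lem_2by2-matrices}, exactly as in \cite[Proposition 5.3]{BF15}. Write $d=\det(J)\det(K)$, a positive odd integer equal to the order of $H_1(\Sigma(-J\sharp K);\Z)$. Since $\det(J)$ and $\det(K)$ are coprime, Corollary~\ref{cor_linking_pairing} applies, and combined with the hypothesis $d(J,K)\le 2$ it yields an integral symmetric matrix $B$ of size at most $2$ such that $\lambda(B)$ is isometric to $2\lk_{-J\sharp K}$ and the reduction of $B$ modulo $2$ is congruent over $\Z/2\Z$ to an identity matrix.

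First I would arrange that $B$ has size exactly $2$. If its size is $0$ or $1$, replace $B$ by $B\oplus(1)$, once or twice as needed. This does not change $\lambda(B)$, since $\lambda((1))$ is the trivial pairing on the zero module, and if the reduction of $B$ modulo $2$ equals $Q^T Q$ for an invertible matrix $Q$ over $\Z/2\Z$, then the reduction of $B\oplus(1)$ equals $(Q\oplus(1))^T(Q\oplus(1))$, hence is again congruent to an identity matrix. Next, since $2\lk_{-J\sharp K}$ is a nonsingular pairing on a group of order $d$, the quotient $\Z^2/B\Z^2$ has order $d$; thus $\det B=\pm d\ne 0$, and $B$ is a nonsingular symmetric integral $2\times 2$-matrix.

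Now I would apply Lemma~\ref{lem_2by2-matrices} to obtain $P\in\gl_2(\Z)$ with $C:=P^T B P$ of one of the two forms listed there. Because an integral congruence $C=P^T B P$ induces an isometry $\Z^2/C\Z^2\to\Z^2/B\Z^2$, $x\mapsto (P^T)^{-1}x$, carrying $\lambda(C)$ onto $\lambda(B)$, the pairing $\lambda(C)$ is isometric to $\lambda(B)$, hence to $2\lk_{-J\sharp K}$. Moreover $\det C=\det B=\pm d$, and the reduction of $C$ modulo $2$ is congruent over $\Z/2\Z$ to that of $B$, hence to an identity matrix. Therefore $C$ lies in $\mathcal{C}_d$ when $\det C=d$ and in $\mathcal{C}_{-d}$ when $\det C=-d$, as required.

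The whole argument is essentially formal; the one point that needs a moment's care is that ``congruent to the identity matrix modulo $2$'' must be read as congruence over $\Z/2\Z$ of the mod $2$ reduction (equivalently: the reduction is nonsingular and non-alternating), rather than literal equality with $I_2$. Only with this reading is the condition automatically preserved under the integral congruence that brings $B$ to Gauss normal form; reading it as literal equality, the passage to normal form could destroy it. The degenerate case $d=1$, where $2\lk_{-J\sharp K}$ is the trivial pairing and one simply takes $C=I_2$, is the remaining piece of bookkeeping.
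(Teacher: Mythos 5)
Your proposal is correct and takes exactly the route the paper intends: the paper presents no written proof, simply declaring the proposition immediate from Corollary~\ref{cor_linking_pairing} and Lemma~\ref{lem_2by2-matrices}, and your argument supplies precisely the details behind that claim (stabilizing to size $2$, reading off $|\det B|=d$ from the order of $H_1(\Sigma(-J\sharp K))$, and checking that Gauss reduction preserves both the isometry class of $\lambda$ and the mod-$2$ congruence class). Your cautionary remark about reading ``congruent to the identity matrix modulo $2$'' as $\F_2$-congruence of the reduction is also the right reading, consistent with the definition of $n_\varphi$ via the condition on $A_0$.
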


The following lemma gives an elementary way to check whether $2 \lk_J$ is isometric to $\lambda(C)$ for a given integral $2 \times 2$-matrix $C$.

\begin{lem}[{\cite[Lemma 5.4]{BF15}}]
Let $J$ and $K$ be knots in $S^3$ and $C$ a symmetric integral $2 \times 2$-matrix with $\det(C) = \pm \det(J) \det(K)$.
Then $\lambda(C)$ is isometric to $2 \lk_{-J \sharp K}$ if and only if there exist generators $v_1$, $v_2$ of $H_1(\Sigma(-J \sharp K))$ such that
\[ 2 \lk_{-J \sharp K}(v_1, v_2) = \text{$(i, j)$-entry of $C^{-1}$}~ \in \Q / \Z \]
for any $i$, $j \in \{ 1, 2 \}$.
\end{lem}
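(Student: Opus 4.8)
The plan is to build the isometry by hand on the standard generators of $\Z^2$. Write $H = H_1(\Sigma(-J \sharp K))$ and $d = \det(J)\det(K)$. Three preliminary observations are in order. First, since the order of the first homology of the double branched cover of a knot equals its determinant, and $\det$ is multiplicative under connected sum and unchanged by mirror image and orientation reversal, we have $|H| = d$; on the matrix side $|\Z^2/C\Z^2| = |\det C| = d$ as well. Second, $d$ is odd, so multiplication by $2$ is invertible on $H$, and hence $2\lk_{-J \sharp K}$ is nonsingular because $\lk_{-J \sharp K}$ is. Third, directly from the definition of $\lambda$, writing $e_1, e_2$ for the standard basis of $\Z^2$ and $\bar e_i$ for its image in $\Z^2/C\Z^2$, one has $\lambda(C)(\bar e_i, \bar e_j) = (C^{-1})_{ij} \in \Q/\Z$, and the elements $\bar e_1, \bar e_2$ generate $\Z^2/C\Z^2$.

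For the forward implication, suppose $\psi \colon \Z^2/C\Z^2 \to H$ is an isometry from $\lambda(C)$ to $2\lk_{-J \sharp K}$, and set $v_i = \psi(\bar e_i)$. Then $v_1, v_2$ generate $H$ since $\bar e_1, \bar e_2$ generate and $\psi$ is onto, and $2\lk_{-J \sharp K}(v_i, v_j) = \lambda(C)(\bar e_i, \bar e_j) = (C^{-1})_{ij}$ in $\Q/\Z$, which is what is claimed.

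For the converse, given generators $v_1, v_2$ of $H$ with $2\lk_{-J \sharp K}(v_i, v_j) = (C^{-1})_{ij}$ for all $i,j$, I would define $\psi \colon \Z^2 \to H$ by $e_i \mapsto v_i$; this is surjective. The main point is that $\psi$ annihilates the column lattice $C\Z^2$: for each $j$ and each $k \in \{1,2\}$,
\[ 2\lk_{-J \sharp K}\bigl(\psi(Ce_j),\, v_k\bigr) = \sum_i C_{ij}\, 2\lk_{-J \sharp K}(v_i, v_k) = \sum_i C_{ji}\,(C^{-1})_{ik} = (C C^{-1})_{jk} = \delta_{jk}, \]
where the third equality uses the symmetry of $C$; since $\delta_{jk} \in \Z$ this is $0$ in $\Q/\Z$. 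As $v_1, v_2$ generate $H$ and $2\lk_{-J \sharp K}$ is nonsingular, it follows that $\psi(Ce_j) = 0$, so $\psi$ descends to a surjection $\bar\psi \colon \Z^2/C\Z^2 \to H$ of finite groups of the same order $d$, hence an isomorphism. Finally $\bar\psi$ carries $\lambda(C)$ to $2\lk_{-J \sharp K}$ by the identity $\lambda(C)(\bar e_i, \bar e_j) = (C^{-1})_{ij} = 2\lk_{-J \sharp K}(v_i, v_j)$ together with biadditivity over the generating set $\{\bar e_1, \bar e_2\}$, so it is the desired isometry.

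There is no genuinely hard step: once the matching of orders, the nonsingularity of $2\lk_{-J \sharp K}$, and the formula for $\lambda(C)$ on $\bar e_1, \bar e_2$ are recorded, the argument is formal. The only place that needs a moment's care is the descent in the converse direction, where one must verify that $\psi$ kills the entire sublattice $C\Z^2$ rather than merely being well defined on its generators; this is exactly what the displayed computation, using the symmetry of $C$, supplies.
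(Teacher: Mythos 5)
Your proof is correct and complete. The paper does not reprove this lemma (it only cites \cite[Lemma~5.4]{BF15}), so the relevant comparison is with the argument in that reference, which is essentially the same standard one: push the standard basis of $\Z^2$ forward, and in the converse direction use symmetry of $C$ to verify that the column lattice $C\Z^2$ maps to zero, combined with nonsingularity of $2\lk_{-J\sharp K}$ and the matching of orders $|\Z^2/C\Z^2| = |\det C| = \det(J)\det(K) = |H_1(\Sigma(-J\sharp K))|$. You correctly flag the one point that requires attention, namely that killing $C\Z^2$ needs the displayed computation and not merely well-definedness on generators; the rest is formal, exactly as you say.

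One small cosmetic remark: the statement as printed in the paper has an evident typo, writing $2\lk_{-J\sharp K}(v_1, v_2)$ where $2\lk_{-J\sharp K}(v_i, v_j)$ is meant; you have silently and correctly repaired this in your argument.
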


Similarly, we can extend our approach using the linking pairing to give an obstruction for $d(J, K) = m$ for an arbitrary positive integer $m$.
See \cite[Section 5.3]{BF15} for the obstruction for $\lk_J$ to be isometric to $\lambda(C)$ for a positive-definite symmetric $m \times m$-matrix $C$.

Note that our approach using the linking pairing does not see orientations of knots.
First, it is well-known that the linking pairing is additive under the connected sum of knots:
$\lk_{J \sharp K} = \lk_J \oplus \lk_K$.
Second, the linking pairing is invariant under the change of the orientation of a knot:
$\lk_{rJ} = \lk_J$, where we write $rJ$ for a knot $J$ with opposite orientation.
This follows from the facts that $\lk_J$ is an invariant of the double branched cover $\Sigma(J)$ of $S^3$ along $J$ as an oriented $3$-manifold and that $\Sigma(J)$ and $\Sigma(rJ)$ are isomorphic as oriented $3$-manifolds.
Therefore $\lk_{-J \sharp K}$, $\lk_{-J \sharp rK}$, $\lk_{-rJ \sharp K}$ and $\lk_{-rJ \sharp rK}$ are all isometric, and consequently the obstructions by Corollary~\ref{cor_obstruction1} and Proposition~\ref{prop_obstruction2} provide the same lower bounds of $d(J, K)$, $d(J, rK)$, $d(rJ, K)$ and $d(rJ, rK)$.

\subsection{Examples}

We have written a computer program which given Seifert surfaces of knots $J$ and $K$ calculates the linking pairing $\lk_{-J \sharp K}$ and determines whether the obstructions by Corollary~\ref{cor_obstruction1} and Proposition~\ref{prop_obstruction2} show that $d(J, K) \geq 2$ and $d(J, K) \geq 3$ respectively.
It is well-known that the linking pairing $\lk_J$ of $J$ is isometric to $\lambda(A+A^T)$ for a Seifert matrix $A$ of $J$.
We have computed the results for all pairs of nontrivial knots (possibly non-prime) together with their mirror images with up to $10$ crossings, using the database of Seifert surfaces in Knotinfo~\cite{LM}, and compared the results with the following lower bounds.
The full details of the results are available on the website \cite{FKS}.

\begin{enumerate}
\item The signature $\sigma(J)$ gives the lower bound~\cite{Muk85, Mus65}:
\[ d(J, K) \geq \frac{1}{2} \left| \sigma(J) - \sigma(K) \right| \]
\item The Rasmussen $s$-invariant $s(J)$ gives the lower bound~\cite{Ra10}:
\[ d(J, K) \geq \frac{1}{2} \left| s(J) - s(K) \right| \]
\item The Ozsv\'ath-Szab\'o $\tau$-invariant $\tau(J)$ gives the lower bound~\cite{OS03}:
\[ d(J, K) \geq \left| \tau(J) - \tau(K) \right| \]
\item For every odd prime $p$ the rank of $H_1(\Sigma(J); \F_p)$ over a finite field $\F_p$ gives the lower bound (see Corollary~\ref{cor_rank}):
\[ d(J, K) \geq \left| \rank H_1(\Sigma(J); \F_p) - \rank H_1(\Sigma(K); \F_p) \right|. \]
\item Torisu~\cite{To98} showed a necessary and sufficient condition for pairs of $2$-bridge knots of $d(J, K) = 1$ in terms of their $2$-bridge notations.
\end{enumerate}

We denote by $g_4^{smooth}(J)$ the smooth $4$-ball genus of $J$.
The lower bounds (1), (2) and (3) follow from the inequality $d(J, K) \geq g_4^{smooth}(-J \sharp K)$, the lower bounds of $g_4^{smooth}(J)$ by the invariants and the additivity of them. 
Note that $-\sigma(J) = s(J) = 2 \tau(J)$ for an alternating knot $J$.
The $s$-invariant and the $\tau$-invariant have been computed for all prime knots with up to $12$ crossings (see \cite{LM} and \cite{BG12}).
The lower bound (4) is well-known, and see Section~\ref{subsec_rank} for a proof.

An upper bound of the Gordian distance is given by the triangle inequality for a knot $I$:
\[ d(J, K) \leq d(I, J) + d(I, K). \]
In particular, when $I$ is the unknot, we have the following upper bound:
\[ d(J, K) \leq u(J) + u(K). \]
The unknotting number has been computed for all prime knots with up to $10$ crossings, except for $10_{11}$, $10_{47}$, $10_{51}$, $10_{54}$, $10_{61}$, $10_{76}$, $10_{77}$, $10_{79}$ and $10_{100}$.
(It is known that $u(K) = 2$ or $3$ for the above excepted knots $K$.)
Literature on the unknotting number is extensive.
See \cite{LM} and the references given there.
We do not have useful upper bounds on $d(J, K)$ using any other numerical invariants of knots.

As discussed in Section~\ref{subsec_obstructions} the obstructions by Corollary~\ref{cor_obstruction1} and Proposition~\ref{prop_obstruction2} provide the same lower bounds of $d(J, K)$, $d(J, rK)$, $d(rJ, K)$ and $d(rJ, rK)$.
Also, by definition, $d(J, K) = d(K, J) = d(mJ, mK) = d(mK, mJ)$, where we write $mJ$ for the mirror image of a knot $J$.
Therefore in describing numbers of pairs in our computational results we do not distinguish the pairs $(J, K)$, $(rJ, K)$, $(J, rK)$ and $(rJ, rK)$, and neither the pairs $(J, K)$, $(K, J)$, $(mJ, mK)$ and $(mK, mJ)$.

Applying Corollary~\ref{cor_obstruction1}, we have the following results.
Among all pairs $(J, K)$ of nontrivial prime knots with up to $10$ crossings such that at least one of $J$ and $K$ is not a $2$-bridge knot there are $10272$ pairs for which Corollary~\ref{cor_obstruction1} shows that $d(J, K) \geq 2$ but the value on the right hand side in any of the $4$ lower bounds (1), (2), (3) and (4) is $0$ or $1$.
For $1853$ pairs $(J, K)$ of knots among the $10272$ ones, it is known that $u(J) + u(K) \leq 2$, and hence we can conclude that $d(J, K) = u(J) + u(K) = 2$.
Also, if we count the numbers of such pairs of knots, instead, for all pairs $(J, K)$ of nontrivial knots (possibly non-prime) with up to $10$ crossings such that at least one of $J$ and $K$ is not a $2$-bridge knot, then the first one $10272$ is replaced by $12577$, and the second one $1853$ is unchanged.

\begin{exmp} \label{exmp_1}
Let $(J, K) = (8_{17}, 8_{21})$, where the bridge indices of $8_{17}$ and $8_{21}$ are equal to $3$, and $\det(8_{17}) = 37$ and $\det(8_{21}) = 15$.
A calculation of $\lk_{-8_{17} \sharp 8_{21}}$ from their Seifert surfaces shows that there exists no generator $g$ of $H_1(\Sigma(-8_{17} \sharp 8_{21}))$ such that
\[ \lk_{-8_{17} \sharp 8_{21}}(g, g) = \pm \frac{2}{37 \cdot 15} \in \Z /  (37 \cdot 15) \Z. \]
It thus follows from Corollary~\ref{cor_obstruction1} that $d(8_{17}, 8_{21}) \geq 2$.
On the other hand, since $-\sigma(8_{17}) = s(8_{17}) = 2 \tau(8_{17}) = 0$ and $-\sigma(8_{21}) = s(8_{21}) = 2 \tau(8_{21}) = 2$, we have $\frac{1}{2} |\sigma(8_{17}) - \sigma(8_{21})| = \frac{1}{2} |s(8_{17}) - s(8_{21})| = |\tau(8_{17}) - \tau(8_{21})| = 1$.
Also, since $\rank H_1(\Sigma(8_{17}); \F_p)$ is equal to $1$ for $p = 37$ and to $0$ for other odd primes $p$, and since $\rank H_1(\Sigma(8_{21}); \F_p)$ is equal to $1$ for $p = 3, 5$ and to $0$ for other $p$, it follows that $|\rank H_1(\Sigma(8_{17}); \F_p) - \rank H_1(\Sigma(8_{21}); \F_p)|$ is equal to $1$ for $p = 3, 5, 37$ and to $0$ for other $p$.
Thus any of the lower bounds (1), (2), (3) and (4) only gives $d(8_{17}, 8_{21}) \geq 1$.
Moreover, since it is known that $u(8_{17}) = u(8_{21}) = 1$, we have $d(8_{17}, 8_{21}) \leq u(8_{17}) + u(8_{21}) = 2$.
Therefore we can conclude that $d(8_{17}, 8_{21}) = 2$.
\end{exmp}

Applying Proposition~\ref{prop_obstruction2}, we have the following results.
Among all pairs $(J, K)$ of nontrivial prime knots with up to $10$ crossings there are $886$ pairs for which Proposition~\ref{prop_obstruction2} shows that $d(J, K) \geq 3$ but the value on the right hand side in any of the $4$ lower bounds (1), (2), (3) and (4) is less than $3$.
For $195$ pairs $(J, K)$ of knots among the $886$ ones, it is known that $u(J) + u(K) \leq 3$, and hence we can conclude that $d(J, K) = u(J) + u(K) = 3$.
Also, if we count the numbers of such pairs of knots, instead, for all pairs $(J, K)$ of nontrivial knots (possibly non-prime) with up to $10$ crossings, then the first one $886$ is replaced by $1696$, and the second one $195$ is by $360$.

\begin{exmp} \label{exmp_2}
Let $(J, K) = (8_7, 9_{40})$, where $\det(8_7) = 23$ and $\det(9_{40}) = 75$.
A calculation of $\lk_{-8_7 \sharp 9_{40}}$ from their Seifert surfaces show that there exists no matrix $C \in \mathcal{C}_d \cup \mathcal{C}_{-d}$, where $d = 23 \cdot 75$, such that $\lambda(C)$ is isometric to $2 \lk_{-8_7 \sharp 9_{40}}$.
It thus follows from Proposition~\ref{prop_obstruction2} that $d(8_7, 9_{40}) \geq 3$.
On the other hand, since $-\sigma(8_7) = s(8_7) = 2 \tau(8_7) = -2$ and $-\sigma(9_{40}) = s(9_{40}) = 2 \tau(9_{40}) = 2$, we have $\frac{1}{2} |\sigma(5_2) - \sigma(9_{40})| = \frac{1}{2} |s(5_2) - s(9_{40})| = |\tau(5_2) - \tau(9_{40})| = 2$.
Note that both of $8_7$ and $9_{40}$ are alternating knots.
Also, since $\rank H_1(\Sigma(8_7); \F_p)$ is equal to $1$ for $p = 23$ and to $0$ for other odd primes $p$, and since $\rank H_1(\Sigma(9_{40}); \F_p)$ is equal to $2$ for $p = 5$, to $1$ for $p = 3$ and to $0$ for other $p$, it follows that $|\rank H_1(\Sigma(8_7); \F_p) - \rank H_1(\Sigma(9_{40}); \F_p)|$ is equal to $2$ for $p = 5$, to $1$ for $p = 3, 23$ and to $0$ for other $p$.
Thus any of the lower bounds (1), (2), (3) and (4) only gives $d(8_7, 9_{40}) \geq 2$.
Moreover, since it is known that $u(8_7) = 1$ and $u(9_{40}) = 2$, we have $d(8_7, 9_{40}) \leq u(8_7) + u(9_{40}) = 3$.
Therefore we can conclude that $d(8_7, 9_{40}) = 3$.
\end{exmp}

\begin{exmp} \label{exmp_3}
Let $(J, K) = (3_1, 4_1 \sharp 4_1)$, where $\det(3_1) = 3$ and $\det(4_1 \sharp 4_1) = 25$.
Similarly, a calculation of $\lk_{-3_1 \sharp 4_1 \sharp 4_1}$ shows that we can apply Proposition~\ref{prop_obstruction2} to have $d(3_1, 4_1 \sharp 4_1) \geq 3$.
Since $-\sigma(3_1) = s(3_1) = 2 \tau(3_1) = -2$ and $-\sigma(4_1 \sharp 4_1) = s(4_1 \sharp 4_1) = 2 \tau(4_1 \sharp 4_1) = 0$, we have $\frac{1}{2} |\sigma(3_1) - \sigma(4_1 \sharp 4_1)| = \frac{1}{2} |s(3_1) - s(4_1 \sharp 4_1)| = |\tau(3_1) - \tau(4_1 \sharp 4_1)| = 1$.
Note that both of $3_1$ and $4_1 \sharp 4_1$ are alternating knots, and that $4_1 \sharp 4_1$ is a slice knot.
Also, since $\rank H_1(\Sigma(3_1); \F_p)$ is equal to $1$ for $p = 3$ and to $0$ for other odd primes $p$, and since $\rank H_1(\Sigma(4_1 \sharp 4_1); \F_p)$ is equal to $2$ for $p = 5$ and to $0$ for other $p$, it follows that $|\rank H_1(\Sigma(3_1); \F_p) - \rank H_1(\Sigma(4_1 \sharp 4_1); \F_p)|$ is equal to $2$ for $p = 5$, to $1$ for $p = 3$ and to $0$ for other $p$.
Thus any of the lower bounds (1), (2) and (3) only gives $d(3_1, 4_1 \sharp 4_1) \geq 1$, and the lower bound (4) does $d(3_1, 4_1 \sharp 4_1) \geq 2$.
Moreover, since it is known that $u(3_1) = u(4_1) = 1$, we have $d(3_1, 4_1 \sharp 4_1) \leq u(3_1) + u(4_1 \sharp 4_1) \leq u(3_1) + 2 u(4_1) = 3$.
Therefore we can conclude that $d(3_1, 4_1 \sharp 4_1) = 3$.
\end{exmp}

\begin{rem}
\begin{enumerate}
\item Kawauchi~\cite{K12}, Murakami~\cite{Muk85} and Nakanishi~\cite{N81} gave obstructions for $d(J, K) = 1$ in terms of the Alexander polynomial.
\item Darcy and Summers~\cite{DS97, DS98}, Motegi~\cite{Mot96} and Torisu~\cite{To98} gave obstructions for $d(J, K) = 1$ applicable for many pairs of $2$-bridge knots or Montesinos knots.
\item Darcy~\cite{D} and Moon~\cite{Moo10} gave tables of lower and upper bounds of the Gordian distance of many pairs of knots with $10$ crossings or less.
For example, their tables describe $1 \leq d(8_{17}, 8_{21}) \leq 2$ and $2 \leq d(3_1, 4_1 \sharp 4_1) \leq 3$, but our lower bound determines that $d(8_{17}, 8_{21}) = 2$ and $d(3_1, 4_1 \sharp 4_1) = 3$ as in Examples~\ref{exmp_1} and \ref{exmp_3}.
\item Miyazawa~\cite{Mi11} showed lower bounds of the Gordian distance using the Jones, HOMFLY and $Q$-polynomials of knots and gave lists of the Gordian distance or its lower bounds for many pairs of knots with $10$ crossings or less.
For example, Miyazawa~\cite[Corollary 3.5]{Mi11} showed that $d(8_7, 9_{40}) \geq 2$, but our lower bound determines that $d(8_7, 9_{40}) = 3$ as in Example~\ref{exmp_2}.
\end{enumerate}
\end{rem}

\subsection{Minimal number of generators of the Alexander modules} \label{subsec_rank}

For readers' convenience we prove the following well-known lower bounds of the Gordian distance.
Wendt~\cite{W37} first gave lower bounds by the ranks of the first homology groups of the cyclic branched covers.
For a knot $J$ and a homomorphism $\varphi \colon \Lambda \to R$, which is not necessarily admissible, we denote by $m_\varphi(J)$ the minimal number of generators of $H_1^\varphi(X_J; R)$.

\begin{prop}
Let $J$ and $K$ be knots in $S^3$, and let $\varphi \colon \Lambda \to R$ be a homomorphism over a principal ideal domain.
Then $d(J, K) \geq |m_\varphi(J) - m_\varphi(K)|$.
\end{prop}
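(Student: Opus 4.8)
The plan is to reduce to the case of a single crossing change and then telescope. Since $d(J,K)$ is the minimal number of crossing changes turning $J$ into $K$, there is a finite sequence of knots $J = J_0, J_1, \dots, J_{d(J,K)} = K$ in which each $J_i$ is obtained from $J_{i-1}$ by one crossing change. Hence it suffices to prove that $|m_\varphi(J') - m_\varphi(J'')| \le 1$ whenever $J'$ and $J''$ differ by a single crossing change, for then $|m_\varphi(J) - m_\varphi(K)| \le \sum_{i=1}^{d(J,K)} |m_\varphi(J_{i-1}) - m_\varphi(J_i)| \le d(J,K)$. Note that no hypothesis on Alexander polynomials, coprimality or admissibility is needed here.

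So assume $J''$ is obtained from $J'$ by one crossing change. Recall that this is realized by surgery with framing $\pm 1$ along a crossing circle $c$: an unknotted simple closed curve in $S^3$, disjoint from $J'$, bounding an embedded disk that meets $J'$ in two points of opposite sign; in particular $\lk(c, J') = 0$. Let $X = S^3 \setminus \Int N(J' \cup c)$, a compact $3$-manifold with two torus boundary components, whose first homology is freely generated by the meridians $\mu$ of $J'$ and $\mu_c$ of $c$. Because $\lk(c, J') = 0$, the epimorphism $H_1(X; \Z) \to \Z$ with $\mu \mapsto 1$ and $\mu_c \mapsto 0$ is defined, and it is compatible with the usual epimorphisms on $X_{J'}$ and $X_{J''}$. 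Both $X_{J'}$ and $X_{J''}$ arise from $X$ by Dehn filling the boundary component $\partial N(c)$: filling along the slope $\mu_c$ recovers $X_{J'}$, and filling along the slope $\mu_c \pm \lambda_c$, where $\lambda_c$ is the $0$-framed longitude of $c$, produces $X_{J''}$. Since $\lambda_c$ is also null-homologous in $X$, both filling slopes are null-homologous in $X$ and in particular lift to the infinite cyclic cover.

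For a filling $X_\gamma = X \cup_{\partial N(c)} (S^1 \times D^2)$ along a null-homologous slope $\gamma$, I would run the Mayer--Vietoris sequence in $\varphi$-twisted homology over $R$. The coefficient system restricts trivially to $\partial N(c)$ and to the attached solid torus (every relevant curve near $c$ being null-homologous), so that $H_i^\varphi(\partial N(c); R)$ is $R, R^2, R$ and $H_i^\varphi(S^1 \times D^2; R)$ is $R, R, 0$ for $i = 0, 1, 2$; moreover the map $H_0^\varphi(\partial N(c); R) \to H_0^\varphi(X; R) \oplus H_0^\varphi(S^1 \times D^2; R)$ is injective, so the connecting homomorphism $H_1^\varphi(X_\gamma; R) \to H_0^\varphi(\partial N(c); R)$ vanishes. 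The sequence therefore collapses to an exact sequence $H_1^\varphi(\partial N(c); R) \to H_1^\varphi(X; R) \oplus H_1^\varphi(S^1 \times D^2; R) \to H_1^\varphi(X_\gamma; R) \to 0$, and unwinding the maps — using that $H_1^\varphi(S^1 \times D^2; R) \cong R$ is generated by the core while the filling slope bounds the meridian disk — identifies $H_1^\varphi(X_\gamma; R)$ with $H_1^\varphi(X; R)/\langle [\gamma] \rangle$, the quotient by the cyclic submodule generated by the class of a lift of $\gamma$. Applying this to the two fillings above, $M := H_1^\varphi(X; R)$ is a finitely generated $R$-module (since $R$ is Noetherian and $X$ has a finite CW structure), and $H_1^\varphi(X_{J'}; R) \cong M/\langle x' \rangle$ and $H_1^\varphi(X_{J''}; R) \cong M/\langle x'' \rangle$ for some $x', x'' \in M$.

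Finally I would invoke the elementary fact that for a finitely generated $R$-module $M$ and any $x \in M$, the minimal number of generators $e$ satisfies $e(M) - 1 \le e(M/\langle x \rangle) \le e(M)$: the right inequality is immediate, and for the left one, lifts to $M$ of a minimal generating set of $M/\langle x \rangle$ together with $x$ generate $M$. Hence $e(M/\langle x' \rangle)$ and $e(M/\langle x'' \rangle)$ both lie in $\{e(M)-1, e(M)\}$, so $|m_\varphi(J') - m_\varphi(J'')| \le 1$, which completes the reduction. I expect the one genuinely delicate step to be the identification $H_1^\varphi(X_\gamma; R) \cong H_1^\varphi(X; R)/\langle[\gamma]\rangle$: one must check that the $\varphi$-twisted local system is trivial on the pieces glued near $c$ — this is exactly where $\lk(c, J') = 0$ enters — and that the degree-zero part of the Mayer--Vietoris sequence is injective, so that no extra generators or relations survive. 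Everything else is formal.
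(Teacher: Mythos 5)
Your proposal is correct and takes essentially the same route as the paper: reduce to a single crossing change, decompose the knot complement along the crossing circle $c$ via Mayer--Vietoris (the paper's $Y$ is your $X = S^3 \setminus \Int N(J' \cup c)$, and its $Z$ is your attached solid torus), observe that $H_1^\varphi(X_{J'}; R)$ and $H_1^\varphi(X_{J''}; R)$ are each a cyclic quotient of the same module $H_1^\varphi(X; R)$, and apply the elementary inequality on minimal numbers of generators. The only difference is one of exposition — you spell out the identification with a cyclic quotient where the paper appeals to ``a standard argument on finitely generated modules over a principal ideal domain'' — and your remark that admissibility is not actually needed is accurate (the paper's auxiliary lemma states the admissibility hypothesis even though it is not used).
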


This proposition can be easily deduced from the following lemma.

\begin{lem}
Let $J$ and $K$ be knots in $S^3$ such that $J$ can be turned into $K$ by a single crossing change, and let $\varphi \colon \Lambda \to R$ be an admissible homomorphism over a principle ideal domain.
Then $|m_\varphi(J) - m_\varphi(K)|$ is equal to $0$ or $1$. 
\end{lem}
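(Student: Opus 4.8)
The plan is to realize the crossing change by a Dehn surgery and to compare $H_1^\varphi(X_J;R)$ and $H_1^\varphi(X_K;R)$ through the twisted homology of a common piece. Recall that a single crossing change on $J$ is realized by $(\pm 1)$-surgery along the boundary $c$ of a crossing disk, and that this disk meets $J$ in two points of opposite sign, so $\lk(c,J)=0$; the same disk exhibits $\lk(c,K)=0$. Thus $c$ is nullhomologous in both $X_J$ and $X_K$ and may be assumed disjoint from tubular neighborhoods of $J$ and of $K$. Set $Y$ to be the complement in $X_J$ of an open tubular neighborhood $N(c)$ of $c$; then $Y$ is also such a complement in $X_K$, and both $X_J$ and $X_K$ are recovered from $Y$ by gluing a solid torus along $P:=\partial N(c)$. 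The epimorphism $H_1(Y;\Z)\to\Z$ carrying the meridian of $J$ (equivalently of $K$) to $1$ and the meridian of $c$ to $0$ is compatible with both inclusion-induced maps $H_1(Y;\Z)\to H_1(X_J;\Z)$ and $H_1(Y;\Z)\to H_1(X_K;\Z)$, so the infinite cyclic covers of $X_J$, $X_K$ and $Y$ are all compatible and the twisted homology groups below are well defined.

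Next I would feed the pair $(X_J,Y)$ into the long exact sequence in twisted homology with $R$-coefficients. By excision $H_i^\varphi(X_J,Y;R)\cong H_i^\varphi(N(c),P;R)$, and since the infinite cyclic cover restricts to the trivial $\Z$-cover over $N(c)$ (because $c$ maps to $0\in\Z$), these are computed from the \emph{untwisted} groups $H_i(S^1\times D^2,T^2;R)$. A standard computation gives $H_1(S^1\times D^2,T^2;R)=0$ and $H_2(S^1\times D^2,T^2;R)\cong R$, so the long exact sequence of $(X_J,Y)$ produces an exact sequence
\[ R\xrightarrow{\ \partial\ }H_1^\varphi(Y;R)\longrightarrow H_1^\varphi(X_J;R)\longrightarrow 0 . \]
Hence $H_1^\varphi(X_J;R)$ is the quotient of $H_1^\varphi(Y;R)$ by the cyclic submodule $\im\partial$; running the same argument with $K$ in place of $J$ shows $H_1^\varphi(X_K;R)$ is likewise a quotient of $H_1^\varphi(Y;R)$ by a cyclic submodule.

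Finally I would extract the numerical consequence from a short exact sequence $0\to Rz\to H_1^\varphi(Y;R)\to H_1^\varphi(X_J;R)\to 0$ with $Rz$ cyclic: a surjection cannot increase the minimal number of generators, so $m_\varphi(J)\le m_\varphi(Y)$; and lifting a minimal generating set of $H_1^\varphi(X_J;R)$ and adjoining $z$ gives $m_\varphi(Y)\le m_\varphi(J)+1$. Thus $m_\varphi(Y)-1\le m_\varphi(J)\le m_\varphi(Y)$, and the identical estimate holds for $K$, whence $|m_\varphi(J)-m_\varphi(K)|\le 1$, as claimed. The one genuinely topological step, and the place I would take most care, is the identification of the relative twisted homology $H_i^\varphi(X_J,Y;R)$, which rests on the covering restricting trivially over the surgery solid torus; note that the argument in fact uses neither that $R$ is a principal ideal domain nor that $\varphi$ is admissible.
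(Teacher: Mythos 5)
Your argument is correct and rests on the same underlying idea as the paper's --- comparing $H_1^\varphi(X_J;R)$ and $H_1^\varphi(X_K;R)$ through the twisted homology of the common complement $Y = X_J \smallsetminus N(c) = X_K \smallsetminus N(c)$ of the crossing circle --- but the homological packaging is different and in fact cleaner. The paper runs the Mayer--Vietoris sequence for the decomposition $X_J = Y \cup N(c)$ and then argues that, because $H_1(\partial N(c);R)\to H_1(N(c);R)$ is onto, $H_1^\varphi(X_J;R)$ is a quotient of $H_1^\varphi(Y;R)$ by the image of $\ker\bigl(H_1(\partial N(c);R)\to H_1(N(c);R)\bigr)$; identifying that kernel as a free rank-one summand is where the PID hypothesis is tacitly invoked. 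You instead use the long exact sequence of the pair $(X_J,Y)$ together with excision and the observation that the infinite cyclic cover is trivial over $N(c)$, so $H_2^\varphi(X_J,Y;R)\cong H_2(S^1\times D^2, T^2;R)\cong R$ outright. This produces the short exact sequence $0\to Rz\to H_1^\varphi(Y;R)\to H_1^\varphi(X_J;R)\to 0$ with a manifestly cyclic kernel, from which $m_\varphi(Y)-1\le m_\varphi(J)\le m_\varphi(Y)$ follows by elementary generator-counting over any ring. Your closing remark is a genuine improvement: the proof uses neither admissibility of $\varphi$ nor that $R$ is a PID (only compactness of $Y$ and Noetherianity of $R$ to ensure finite generation), which also removes a small mismatch in the paper, where the lemma is stated under hypotheses stronger than the proposition it is meant to establish.
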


\begin{proof}
Let $c$ be a simple closed curve in $X_J$ corresponding to a crossing change turning $J$ into $K$ as in the proof of Proposition~\ref{prop_construction}.
Note that $c$ is nullhomologous.
Let $Z$ be an open tubular neighborhood of $c$ in $X_J$, and we denote by $Y$ the complement of $Z$.
Then we consider the Mayer-Vietoris homology exact sequence:
\[ H_1(\partial Z; R) \to H_1^\varphi(Y; R) \oplus H_1(Z; R) \to H_1^\varphi(X_J; R) \to 0, \]
where $H_1(Z; R)$ and $H_1(\partial Z; R)$ are free $R$-modules of rank $1$ and $2$ respectively, and the inclusion induced homomorphism $H_1(\partial Z; R) \to H_1(Z; R)$ is surjective.
Now by a standard argument on finitely generated modules over a principal ideal domain we have $m - 1 \leq m_\varphi(J) \leq m$, where $m$ is the minimal number of generators of $H_1^\varphi(Y; R)$.
Similarly, we also have $m - 1 \leq m_\varphi(K) \leq m$.
Therefore we have $|m_\varphi(J) - m_\varphi(K)| \leq 1$.
\end{proof}

In the case of the admissible homomorphism $\varphi_{-1} \colon \Lambda \to \F_p$ sending $t$ to $-1$ for an odd prime $p$, the twisted homology group $H_1^{\varphi_{-1}}(X_J; \F_p)$ is isomorphic to $H_1(\Sigma(J); \F_p)$.
(See for instance \cite[Lemma 3.3]{BF15}.)
Thus we have the following corollary.

\begin{cor} \label{cor_rank}
Let $J$ and $K$ be knots in $S^3$, and let $p$ be an odd prime.
Then
\[ d(J, K) \geq |\rank H_1(\Sigma(J); \F_p) - \rank H_1(\Sigma(K); \F_p)|. \] 
\end{cor}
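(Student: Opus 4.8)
The plan is to deduce Corollary~\ref{cor_rank} directly from the preceding Proposition by specializing to a suitable homomorphism. First I would set $R = \F_p$ and take $\varphi_{-1} \colon \Lambda \to \F_p$ to be the homomorphism sending $t$ to $-1$, equipped with the trivial involution. Since $p$ is odd we have $-1 \neq 1$ in $\F_p$, and $\F_p$ is a field and hence a principal ideal domain; in particular $\varphi_{-1}$ is an admissible homomorphism over a principal ideal domain, so the Proposition applies and gives
\[ d(J, K) \geq |m_{\varphi_{-1}}(J) - m_{\varphi_{-1}}(K)|, \]
where $m_{\varphi_{-1}}(J)$ denotes the minimal number of generators of the $\F_p$-module $H_1^{\varphi_{-1}}(X_J; \F_p)$, and similarly for $K$.

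It then remains to rewrite the right-hand side in terms of $\rank H_1(\Sigma(J); \F_p)$. For this I would invoke the standard identification, recorded in \cite[Lemma 3.3]{BF15} and quoted just above the corollary, that $H_1^{\varphi_{-1}}(X_J; \F_p)$ is isomorphic to $H_1(\Sigma(J); \F_p)$. Because $\F_p$ is a field, every finitely generated $\F_p$-module is a finite-dimensional vector space, so its minimal number of generators equals its dimension, i.e. its rank; thus $m_{\varphi_{-1}}(J) = \rank H_1(\Sigma(J); \F_p)$ and likewise $m_{\varphi_{-1}}(K) = \rank H_1(\Sigma(K); \F_p)$. Substituting these into the inequality above yields the claim.

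I do not expect any real obstacle here: all the substantive work lies in the Proposition and the Lemma it rests on, which are already established. The only points requiring care are that $p$ must be odd so that $\varphi_{-1}$ sends $t$ to an element different from $1$, and that the passage from ``minimal number of generators'' to ``rank'' is valid precisely because the coefficient ring $\F_p$ is a field.
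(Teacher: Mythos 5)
Your proof is correct and follows essentially the same route as the paper: apply the Proposition to $\varphi_{-1}\colon \Lambda \to \F_p$, use the identification $H_1^{\varphi_{-1}}(X_J;\F_p) \cong H_1(\Sigma(J);\F_p)$ from~\cite[Lemma 3.3]{BF15} as stated immediately before the corollary, and note that over a field the minimal number of generators is the rank. (A minor observation: the Proposition as stated does not even require $\varphi$ to be admissible, so that part of your discussion is stronger than needed, though of course correct.)
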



\end{document}